\newcommand{\abs}[1]{\left\vert#1\right\vert}
\newcommand{\N}{\mathbb{N}}
\newcommand{\set}[1]{\left\{#1\right\}}
\newcommand{\norm}[2]{\| #1 \|_{#2}}
\newcommand{\scalar}[2]{\langle{ #1},{#2} \rangle}
\newcommand{\lr}[1]{\left( #1\right)}
\newcommand{\E}{D}
\newcommand{\Rg}{\mathcal R}
\theoremstyle{plain}
\newtheorem{theorem}{Theorem}
\newtheorem{corollary}{Corollary}
\newtheorem{proposition}{Proposition}
\newtheorem{lemma}{Lemma}
\theoremstyle{definition}
\newtheorem{remark}{Remark}
\title[Degree of ill-posedness for composite linear problems]{The
  degree of ill-posedness of composite linear ill-posed problems with
  focus on the impact of the non-compact Hausdorff moment operator}
\author{Bernd Hofmann}
\address{Faculty of Mathematics, Chemnitz University of Technology,
  09107 Chemnitz,  Germany}
\email{}
\author{Peter Math\'e}
\address{Weierstra{\ss} Institute for Applied Analysis and
  Stochastics, Mohrenstra{\ss}e 39, 10117 Berlin,  Germany}
\begin{document}

\keywords{
Hausdorff moment problem, linear inverse problem, degree of
ill-posedness, composition operator, 
conditional stability
}
\subjclass[2010]{47A52, 47B06, 65J20, 44A60}

\begin{abstract}
  We consider compact composite linear operators in Hilbert space, where the
  composition is given by some compact operator followed by some
  non-compact one possessing a non-closed range.
  Focus is on the impact of the non-compact factor on
  the overall behaviour of the decay rates of the singular values of
  the composition. Specifically, the composition of the
  compact integration operator with the non-compact Hausdorff
  moment operator is considered. We show that the singular values of
  the composition decay faster than the ones of the integration
  operator, providing a first example of this kind. However, there is
  a gap between available lower bounds for the decay rate and the
  obtained result. Therefore we conclude with a discussion.

\end{abstract}

\maketitle

\section{Introduction}
\label{sec:intro}

We consider the following composite linear ill-posed operator
equation~$A x = y$ with
\begin{equation} \label{eq:opeq}
 \begin{CD}
  A:\; @.  X @> D >> Z  @> B>> Y
  \end{CD}
\end{equation}
where $A=B \circ \E: X \to Y$ denotes the compact linear operator with infinite dimensional range~$\Rg(A)$. This forward operator $A$ is a
composition of a compact linear operator $\E: X \to Z$ with infinite dimensional range $\Rg(\E)$ and a bounded non-compact linear operator $B: Z \to Y$ with
non-closed range $\Rg(B) \not=\overline{\Rg(B)}^Y$. Here~$X,Y$ and $Z$
denote three infinite dimensional separable real Hilbert spaces. In the nomenclature of Nashed \cite{Nashed87}, the inner problem is a linear operator equation
\begin{equation} \label{eq:inner}
\E\, x\,=\,z\,,
\end{equation}
which is ill-posed of type~II due to the compactness of $\E$, whereas the outer problem
\begin{equation} \label{eq:outer}
B\, z\,=\,y
\end{equation}
is ill-posed of type~I, since $B$ is non-compact.

Operator equations with non-compact operators possessing a non-closed range are
often assumed to be less ill-posed (ill-posedness of
type~I), and we refer to M.~Z. Nashed in~\cite[p.~55]{Nashed87} who states that ``\dots an equation
involving a bounded non-compact operator with non-closed range is
`less' ill-posed than an equation with a compact operator with
infinite-dimensional range.'' For compact operator equations it is
common to measure the \emph{degree of ill-posedness} in terms of the decay rate of the
singular values, and the above composite operator~(\ref{eq:opeq}) is of this type
despite of the non-compact factor~$B$.

In our subsequent analysis we will mainly analyze and compare
  the following cases, which are of the above type and seemingly should have similar
  properties. The compact factor~$D$ is given either
  \begin{itemize}
  \item[--] as the simple integration operator
\begin{equation}\label{eq:J}
[J x](s):=\int_0^s x(t)dt\qquad(0 \le s \le 1)
\end{equation}
mapping in $L^2(0,1)$, or
\item[--] as the natural (compact) embedding
\begin{equation}
  \label{eq:embk}
  \mathcal E^{(k)}\colon H^{k}(0,1) \hookrightarrow L^{2}(0,1)
\end{equation}
from the Sobolev space $H^{k}(0,1)$ of order $k \in \N$ to $L^2(0,1)$.
 \end{itemize}
This will be composed with~$B$ being either
\begin{itemize}
\item[--] a bounded linear multiplication operator
  \begin{equation}\label{eq:multioperator}
[B^{(M)}x](t):=m(t)\,x(t) \qquad (0 \le t \le 1)
\end{equation}
with a multiplier function $m \in L^\infty(0,1)$ possessing essential
zeros, or
\item[--] the Hausdorff moment operator $B^{(H)}: Z=L^2(0,1) \to Y=\ell^2$ defined as
\begin{equation} \label{eq:Haus}
[B^{(H)}z]_j:= \int_0^1 t^{j-1}z(t)dt \qquad (j=1,2,...).
\end{equation}
\end{itemize}
The inner operators~(\ref{eq:J}) and (\ref{eq:embk}) are known to be compact, even Hilbert-Schmidt, and
the decay rates of their singular values $\sigma_i(J)$ and $\sigma_i(\mathcal{E}^{(k)})$ to zero are available.
Both the above outer operators~(\ref{eq:multioperator}) and~(\ref{eq:Haus}) are known to be non-compact with non-closed range.

The composition~$B^{(M)}\circ J$ was studied
in~\cite{Freitag05,HW05,HW09,VuGo94}. Recent studies of the Hausdorff moment problem, which goes back to
Hausdorff's paper \cite{Hausdorff23}, have been presented
in~\cite{GHHK21}. In particular, we refer to ibid. Theorem~1 and
Proposition~13, 
which yield assertions for the composition of type $B^{(H)}\circ \mathcal{E}^{(k)}$.

The question that we are going to address is the following:
  What is, in terms of the decay of the singular values $\sigma_i(B \circ  D)$ of the composite operator $B \circ  D$  from (\ref{eq:opeq}), the impact of the non-compact outer
  operator $B$?

In the case of $B:= B^{(M)}$ and $D:=J$ results are known.
For several classes of multiplier functions~$m,$ including $m(t)=t^\theta$ for all $\theta>0$,
it was seen that the singular values of the composite operator $A$ obey the equivalence~
\begin{equation}
   \label{eq:SVDcon}
\sigma_{i}(A)  = \sigma_{i}(B^{(M)}\circ J)\; \asymp
\footnote{We shall measure the decay rates of the singular values
    asymptotically; thus for decreasing sequences~$s_{i}\geq 0$
    and~$t_{i}\geq 0$  we say that~$s_{i}\asymp t_{i}\;$ as $\;i\to\infty$
    if there are constants $0<\underline c \le \overline c < \infty$
    such that the inequalities
    $$     \underline{c} \,s_{i} \le t_{j} \le \overline{c} \,s_{i} \qquad (i=1,2,...)$$
are valid.}\; \sigma_{i}(J)\asymp
\frac 1 i\quad {\rm as} \; i\to\infty,
\end{equation}
which means that $B^{(M)}$ does not `destroy' the degree of ill-posedness of $J$ by composition.

\begin{remark}
The right-hand inequalities $\sigma_i(B \circ J) \le \overline{c} \,\sigma_i(J)$, for example required in~(\ref{eq:SVDcon}), are
trivially satisfied if $B$ is bounded. We have
$\sigma_{i}(B^{(M)}\circ J) \leq C\, \sigma_{i}(J)$ with~$C:=
\norm{B^{(M)}}{L^{2}(0,1) \to L^{2}(0,1)} $. Clearly, the same reasoning applies to
the composition operator $B^{(H)} \circ J$, and we have with $C:=
\norm{B^{(H)}}{L^{2}(0,1) \to \ell^2}= \sqrt{\pi}$
(cf.~\cite{Inglese92})
the upper estimate $\sigma_{i}(B^{(H)} \circ J) \leq
\sqrt{\pi}\,\sigma_{i}(J)\;(i=1,2,...)$.
\end{remark}

To the best of our knowledge, by now no examples are known that show a
violation of $\sigma_i(B \circ \E) \asymp \sigma_i(\E)$.
In the present study we shall show that~$\sigma_{i}(B^{(H)}\circ J)/\sigma_{i}(J)
\leq C \,i^{-1/2} \;(i=1,2,...)$ with some positive constant $C$, and the non-compact Hausdorff moment
operator~$B^{(H)}$ enlarges the degree of ill-posedness of $J$ by a
factor~$1/2$, at least.

We shall start  in
Section~\ref{sec:general} with some results for general operators,
relating conditional stability estimates to the decay of the singular
numbers of the composition~$B \circ D$.
Conditional stability estimates for the composition with the
Hausdorff moment operator are given in Section~\ref{sec:HMP}
, both for the embedding operator and the integration
operator. According to Theorem~\ref{thm:general} we
derive lower bounds for the decay rates of the
compositions~$B^{(H)}\circ \mathcal E^{(k)}$ and~$B^{(H)} \circ J$,
respectively.

The composite operators, both,~$A^*A$ for~$A=B^{(H)}\circ J$, and
$\widetilde A^* \widetilde A$ for~$\widetilde A=B^{(M)}\circ J$ are Hilbert-Schmidt operators, because the
factor~$J$ is such. In particular these may be expressed as linear Fredholm
integral operators acting in~$L^{2}(0,1)$ with symmetric positive kernels~$k$
and~$\widetilde k$, respectively. There are well-known results which
state that certain type of kernel smoothness yields a minimum decay
rate of the corresponding singular values of the integral
operator. Therefore,  in Section~\ref{sec:kernel} we establish the
form of the kernels~$k$ and~$\widetilde k$, and we study their
smoothness. In particular, for the composition~$B^{(H)}\circ J$ we
shall see that the known results are not applicable, whereas in
case~$B^{(M)}\circ J$ these known results are in alignment with~$\sigma_{i}(B^{(M)}\circ J) \asymp \sigma_{i}(J)\asymp \frac 1 i$.

Finally, in Section~\ref{sec:bounding-sing-numbers}, we
  improve the upper bounds for
the decay of the singular values of  the composition~$B^{(H)}\circ J$,
giving the first example that violates~$\sigma_{i}(B \circ D)\asymp
\sigma_{i}(D)$ as~$i\to\infty$ in the context of a non-compact outer
operator~$B$. This approach bounds the singular values by means of
bounds for the Hilbert-Schmidt norm of the composition~$\norm{\lr{ B^{(H)}\circ J}(I -
  Q_{n})}{HS}$, where $Q_n$ is a projection on the $n$-dimensional subspace of
adapted Legendre polynomials in $L^2(0,1)$. We continue to discuss the
obtained result in Section~\ref{sec:discussion}. An appendix completes the paper.

\section{Results for general operators} \label{sec:general}

We start with a general theorem explaining the interplay of
conditional stability estimates and upper bounds for the degree of
ill-posedness. To this end we shall use results from the theory
of~$s$-numbers,  and we refer to
the monograph~\cite[Prop.~2.11.6]{Pie87}. In particular, for a compact
operator, say~$T\colon X \to Y$ the singular
values~$\sigma_{i}(T)$ coincide with the corresponding (linear)
approximation numbers~$a_{i}(T)$, and hence the identities
\begin{equation}
  \label{eq:s-nums}
  \sigma_{i}(T) = \|T(I-P_{i-1})\|_{X \to Y} = \inf\{\|T - L\|_{X \to Y}: {\mathrm{dim}}(\mathcal{R}(L))<i\}
\end{equation}
hold for all $i=1,2,\dots\,$.
Above, we denote by $\{\sigma_i(T),u_i,v_i\}_{i=1}^\infty$ with
$T u_i=\sigma_i(T)v_i,\ (i=1,2,...)$ the well-defined (monotonic) singular system of
the compact operator $T$, and~$P_n: X \to X\;(n=1,2,...)$
the orthogonal projection onto~${\rm span}(u_1,...,u_n)$, the $n$-dimensional subspace of $X$, where we assign~$P_0=0: X \to X$.

The main estimate is as follows:
\begin{theorem} \label{thm:general}
Let~$\E: X \to Z$ and~$A: X \to Y$ be compact linear operators between the infinite dimensional Hilbert spaces $X,\,Y$ and
$Z$ with non-closed ranges $\mathcal{R}(\E)$ and
$\mathcal{R}(A)$. Suppose that there exists an index function $\Psi:(0,\infty) \to (0,\infty)$
such that for $0<\delta \le \|A\|_{X \to Y}$ the conditional stability estimate
\begin{equation} \label{eq:condstab}
  \sup\{\,\|\E x\|_Z:\,\|A x\|_Y \le \delta,\;\|x\|_X \le 1\}  \le \Psi(\delta)
\end{equation}
holds. Then we have
\begin{equation} \label{eq:eststab}
  \sigma_i(\E) \le \Psi(\sigma_i(A)) \qquad (i=1,2,...)
\end{equation}
and also
\begin{equation} \label{eq:estinv}
  \Psi^{-1}(\sigma_i(\E)) \le \sigma_i(A) \qquad (i=1,2,...).
\end{equation}

If the operators~$\E^{\ast}\E\colon X \to X$ and~$A^{\ast}A\colon X
\to X$ commute, and if the index function~$t\mapsto \Psi^{2}(\sqrt t),\ t>0$ is concave then the
converse holds true in the sense that~(\ref{eq:eststab}) implies the stability estimate~(\ref{eq:condstab}).
\end{theorem}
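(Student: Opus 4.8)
The plan is to treat the two directions separately, the principal tool being the identification of singular values with approximation numbers in~(\ref{eq:s-nums}).

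For the forward implication, fix $i$ and let $P_{i-1}$ be the orthogonal projection onto the span of the first $i-1$ right singular vectors $u_1,\dots,u_{i-1}$ of $A$. For every $x$ with $\|x\|_X \le 1$ and $x \perp u_1,\dots,u_{i-1}$ one has $\|Ax\|_Y = \|A(I-P_{i-1})x\|_Y \le \sigma_i(A)$, so such $x$ are admissible in the conditional stability estimate~(\ref{eq:condstab}) with $\delta = \sigma_i(A)$, which lies in the admissible range because $\sigma_i(A)\le\|A\|_{X\to Y}$. Taking the supremum gives $\|D(I-P_{i-1})\|_{X\to Z} \le \Psi(\sigma_i(A))$. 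Since $DP_{i-1}$ has rank at most $i-1$, the approximation-number form of~(\ref{eq:s-nums}) yields $\sigma_i(D) \le \|D - DP_{i-1}\|_{X\to Z} = \|D(I-P_{i-1})\|_{X\to Z} \le \Psi(\sigma_i(A))$, which is~(\ref{eq:eststab}); applying the increasing inverse $\Psi^{-1}$ then gives~(\ref{eq:estinv}). I expect this direction to be routine, the only inputs being the variational characterization and the threshold on $\delta$.

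For the converse, the commuting hypothesis allows me to diagonalize $D^{\ast}D$ and $A^{\ast}A$ simultaneously: there is an orthonormal basis $\{u_i\}$ of $X$ with $\|Du_i\|_Z^{2} = d_i$ and $\|Au_i\|_Y^{2} = a_i$. Writing $x = \sum_i c_i u_i$ and setting $w_i := |c_i|^{2}$, the left-hand side of~(\ref{eq:condstab}) becomes the value of the sequence optimization $\sup\{\sum_i w_i d_i : w_i \ge 0,\ \sum_i w_i \le 1,\ \sum_i w_i a_i \le \delta^{2}\}$, which I must bound by $\Psi(\delta)^{2} = \phi(\delta^{2})$ with $\phi(t) := \Psi^{2}(\sqrt t)$. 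Adjoining a slack coordinate carrying zero eigenvalues converts the constraint $\sum_i w_i \le 1$ into a genuine convex combination $\sum_i w_i = 1$, whereupon the concavity of $\phi$ together with Jensen's inequality and monotonicity yields $\sum_i w_i \phi(a_i) \le \phi(\sum_i w_i a_i) \le \phi(\delta^{2})$. Hence the whole matter reduces to the pointwise bound $d_i \le \phi(a_i)$ on the common eigenbasis.

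Securing this pointwise bound is the step I expect to be the main obstacle. The hypothesis~(\ref{eq:eststab}) only compares the monotonically rearranged sequences, giving $d_i^{\downarrow} \le \phi(a_i^{\downarrow})$, whereas the common eigenbasis need not list $\{d_i\}$ and $\{a_i\}$ in a compatible order, so the rearranged inequality does not transfer verbatim to each shared eigenvector. The plan is to exploit the commuting structure to index the common eigensystem so that~(\ref{eq:eststab}) passes to the pointwise form $\|Du_i\|_Z \le \Psi(\|Au_i\|_Y)$ for every $i$; the concavity of $\phi$ is exactly what makes the ensuing averaging argument close, so I anticipate the indexing (co-monotonicity) argument and the concavity hypothesis to be used in tandem. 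Once the pointwise inequality is available, the Jensen step of the previous paragraph completes the proof of~(\ref{eq:condstab}).
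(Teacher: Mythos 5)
Your forward direction is correct and is essentially the paper's own argument: for $\|x\|_X\le 1$ one has $\|A(I-P_{i-1})x\|_Y\le\sigma_i(A)$, so $u:=(I-P_{i-1})x$ is admissible in \eqref{eq:condstab} with $\delta=\sigma_i(A)$, whence $\|D(I-P_{i-1})\|_{X\to Z}\le\Psi(\sigma_i(A))$ and \eqref{eq:eststab} follows from the approximation-number identity \eqref{eq:s-nums}; your explicit observation that $\sigma_i(A)\le\|A\|_{X\to Y}$ keeps $\delta$ in the admissible range is a detail the paper leaves implicit, and passing to \eqref{eq:estinv} via the increasing inverse is the same in both texts.

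The converse, however, is not proved. You correctly reduce everything to the pointwise bound $\|Du_i\|_Z\le\Psi(\|Au_i\|_Y)$ on the common eigenbasis (after which your Jensen step, with the slack coordinate in place of the paper's inequality $f(at)\ge af(t)$ for $a\le1$, is fine), but you then defer that bound to an unspecified ``indexing (co-monotonicity) argument.'' That deferred step is the entire content of the converse, and no such argument exists under the stated hypotheses: commutativity gives a shared eigenbasis but does \emph{not} force the decreasing rearrangements of the two eigenvalue sequences to be realized at the same eigenvectors. Concretely, take $X=Z=Y=\ell^2$, $\Psi(t)=t$, and diagonal operators with $A^\ast A$ having eigenvalues $(1,4,\tfrac19,\tfrac1{16},\dots)$ and $D^\ast D$ eigenvalues $(4,1,\tfrac19,\tfrac1{16},\dots)$ on the standard basis; then $\sigma_i(D)=\sigma_i(A)$ for all $i$, so \eqref{eq:eststab} holds, the operators commute, $t\mapsto\Psi^2(\sqrt t)=t$ is concave, yet $x=e_1$ gives $\|Ax\|_Y=1$ and $\|Dx\|_Z=2$, violating \eqref{eq:condstab} at $\delta=1$. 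The paper's proof tacitly assumes the co-monotone alignment by writing $\|Dx\|_Z^2=\sum_i\sigma_i^2(D)|\langle x,u_i\rangle|^2$ and $\|Ax\|_Y^2=\sum_i\sigma_i^2(A)|\langle x,u_i\rangle|^2$ with one and the same ordered system $(u_i)$; that alignment is guaranteed, for instance, when $D^\ast D=\varphi(A^\ast A)$ for an index function $\varphi$, which is the situation of the remark following the theorem. So your proposal stops exactly where an argument is needed, and the gap cannot be closed from commutativity alone; you must either assume the alignment of the eigenvalue orderings or strengthen the hypothesis as indicated.
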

\begin{proof}
  Suppose that~(\ref{eq:condstab}) holds true. Then for every $u \in
  X,\ \|u\|_X \le 1$, we see that
\begin{equation} \label{eq:impli}
\|Au\|_Y \le \delta \quad \mbox{implies that} \quad  \|\E u\|_Z \le \Psi(\delta) \qquad (\delta>0).
\end{equation}
Consider the singular projections~$P_{i}$ for the operator~$A$.
For arbitrarily chosen~$x \in X$ with $\|x\|_X \le 1$  we see that
$$
\|A(I-P_{i-1})x\|_Y \le \|A(I-P_{i-1})\|_{X \to Y}\,\|x\|_X
\le \sigma_i(A).
$$
Applying~(\ref{eq:impli}) with $u:=(I-P_{i-1})x$ and $\delta:=\sigma_i(A)$ yields
$\|\E(I-P_{i-1})x\|_Z \le \Psi(\sigma_i(A))$. Since  $x \in X$ with
$\|x\|_X \le 1$ was chosen arbitrarily, we even arrive at
$\|\E(I-P_{i-1})\|_{X \to Y} \le \Psi(\sigma_i(A)$.

By virtue of (\ref{eq:s-nums}) we find for
$$\sigma_i(\E) =  \inf\{\|\E - L\|_{X \to Z}: {\mathrm{dim}}(\mathcal{R}(L))<i\}$$
that
\begin{equation} \label{eq:Qj}
\sigma_i(\E) \leq \|\E(I-P_{i-1})\|_{X \to Z} \le \Psi(\sigma_i(A),
\end{equation}
which proves~(\ref{eq:eststab}). Since the inverse of an index function exists and is also an index function, hence
monotonically increasing, the estimate~(\ref{eq:estinv}) is a
consequence of~(\ref{eq:eststab}).

Next, suppose that the operators~$\E^{\ast}\E$ and~$A^{\ast}A$
commute, and hence they share the same singular
functions~$u_{1},u_{2},\dots$ Clearly, for~$x=0$ we have
that~$\norm{\E x}{Z}=0\leq \Psi(\delta)$, so we may and do assume that~$x\neq 0$.
Assume that~(\ref{eq:eststab}) holds. We abbreviate~$f(t):=
\Psi^{2}(\sqrt t),\ t>0$.
First, if~$\norm{x}{X}=1$ then we bound
\begin{align*}
\norm{\E x}{Z}^{2} & = \sum_{i=1}^{\infty}
                     \sigma_{i}^{2}(\E)\abs{\scalar{x}{u_{i}}}^{2} \leq \sum_{i=1}^{\infty}
                     f\lr{\sigma_{i}^{2}(A)}\abs{\scalar{x}{u_{i}}}^{2}  \\
  & \leq
    f\lr{\sum_{i=1}^{\infty}\sigma_{i}^{2}(A)\abs{\scalar{x}{u_{i}}}^{2}}
    = f\lr{\norm{Ax}{Y}^{2}},
\end{align*}
where we used Jensen's Inequality for~$f$. Hence~$\norm{\E x}{Z} \leq
\Psi\lr{\norm{Ax}{Y}}$. Consequently, for~$x\in X, \norm{x}{X}>0$ this
extends to
\begin{equation}
  \label{eq:square-bound-normneq1}
  \frac{\norm{\E x}{Z}}{\norm{x}{X}}\leq
  \Psi\lr{\frac{\norm{Ax}{Y}}{\norm{x}{X}}},\quad x\neq 0.
\end{equation}
For the concave index function~$f$ we see that~$f(at) \geq a f(t),\ t>0$
whenever~$a\leq 1$. Thus for~$a:= \norm{x}{X}^{2}\leq 1$ and~$t:= \lr{\frac{\norm{Ax}{Y}}{\norm{x}{X}}}^{2}$ we find that
$$
\norm{\E x}{Z} \leq \Psi(\norm{Ax}{Y}),\quad x\neq 0,
$$
which in turn yields the validity of~(\ref{eq:condstab}), and this
completes the proof.
\end{proof}

\begin{remark} \label{rem:sufficient}
If the conditional stability estimate \eqref{eq:condstab} is not valid for all $\delta>0$, but for sufficiently small $\delta>0$, then the estimates  \eqref{eq:eststab} and  \eqref{eq:estinv} are
not valid for all $i \in \mathbb{N}$, but for $i$ sufficiently large. Hence, the corresponding assertions about the singular value asymptotics do not change.
\end{remark}

\begin{remark} \label{rem:modul}
We mention here that the term $$\sup\{\,\|\E x\|_Z:\,\|A x\|_Y \le \delta,\;\|x\|_X \le 1\},$$ occurring in formula \eqref{eq:condstab}, is a special case of the modulus of continuity
\begin{equation} \label{eq:modul}
\omega_M(\delta):= \sup\{\,\|\E x\|_Z:\,\|A x\|_Y \le \delta,\;x \in M\}
\end{equation}
with some closed and bounded set $M \subset X$ such that $\E M$ represents a compact set of $Z$. This is due to the compactness of the operator $\E: X \to Z$.
Note that $\omega_M(\delta)$ is increasing in $\delta>0$ with the limit condition
$\lim_{\delta \to 0\,} \omega_M(\delta)=0$. Moreover, we have for constants $E>1$ and
centrally symmetric and convex sets $M$ that $\omega_{E M}(\delta)=E\,\omega_M(\delta/E)$.
For further details of this concept we refer, for example, to \cite{BHM13,HMS08}.
In general, one is interested in bounding the modulus of continuity by a majorant index function $\Psi$ as in formula  \eqref{eq:condstab},
which leads to conditional stability estimates. Precisely in  \eqref{eq:condstab} we have the situation of a centrally symmetric and convex
$M=\{x \in X: \|x\|_X \le 1\}$ under consideration with associated majorant index function $\Psi$. Consequently, this also yields for $E>1$
$$
\sup\{\,\|\E x\|_Z:\,\|A x\|_Y \le \delta,\;\|x\|_X \le E\}  \le E\,\Psi(\delta/E).
$$
It is known from approximation theory, and it was highlighted
in~\cite[Prop.~2.9]{BHM13},  that there is always a concave
majorant for the modulus of continuity, such that without loss of
generality we may assume~$\Psi$ to be concave.
\end{remark}

\begin{remark}
  It is shown in~\cite[Thm.~4.1]{BHM13}
  that the required concavity of the function~$t\mapsto \Psi^{2}(\sqrt
  t),\ t>0$ automatically holds true whenever~$\E^{\ast}\E$ is a function of~$A^{\ast}A$,
  i.e.,\ $\E^{\ast}\E = \varphi(A^{\ast}A)$ for an index function~$\varphi$. For power type functions~$\Psi$ the
  concavity assertion  holds true if and only if~$\Psi$ is concave, see the end of
  Remark~\ref{rem:modul}.
\end{remark}



\section{Compositions with the Hausdorff moment operator} \label{sec:HMP}

In order to apply Theorem~\ref{thm:general} to compositions with  the
integration operator~$B^{(H)}$ from~(\ref{eq:Haus}), we formulate
appropriate conditional stability estimates.

\begin{theorem} \label{thm:Hausdorff}
There are constants~$C_k>0$ depending on $k=0,1,2,...$ such that
  \begin{enumerate}
  \item[(a)] \label{it:stability-E}
  For the composite problem~$B^{(H)}\circ \mathcal E^{(k)}$ the bound
  \begin{equation} \label{eq:supH}
    \sup\{\,\|x\|_{L^2(0,1)}:\,\|B^{(H)}(\mathcal E^{(k)} x)\|_{\ell^2} \le
    \delta,\;\|x\|_{H^k(0,1)} \le 1\}
    \le  \frac{C_k}{(\ln(1/\delta))^k}
  \end{equation}
  holds for sufficiently small $\delta>0$.
\item[(b)]\label{it:stability-J} For the composite problem~$B^{(H)}\circ J$ the bound
\begin{equation} \label{eq:supJ}
\sup\{\,\|J x\|_{L^2(0,1)}:\,\|B^{(H)}(J x)\|_{\ell^2} \le \delta,\;\|x\|_{L^2(0,1)} \le 1\}  \le \frac{C_0}{\ln(1/\delta)}
    \end{equation}
holds for sufficiently small $\delta>0$.
  \end{enumerate}
\end{theorem}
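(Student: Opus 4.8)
The plan is to reduce part~(b) to part~(a) with $k=1$, and then to prove~(a) by a splitting argument that isolates the exponential ill-conditioning of the Hausdorff moments on polynomials from the Sobolev smoothness of~$x$. Since $\mathcal E^{(k)}$ is the embedding we have $B^{(H)}(\mathcal E^{(k)}x)=B^{(H)}x$, so~(a) is a genuine stability estimate for the moment problem on the Sobolev ball. For~(b) I set $z:=Jx$. If $\norm{x}{L^2(0,1)}\le 1$ then $z(0)=0$ and $z'=x$, so $\norm{z'}{L^2(0,1)}\le 1$ and $\norm{z}{L^2(0,1)}\le \norm{Jx}{L^2(0,1)}\le 1$, whence $z$ lies in the ball of $H^1(0,1)$ of radius $E:=\sqrt2>1$. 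Applying~(a) with $k=1$ to~$z$, together with the rescaling $\sup\set{\norm{z}{L^2(0,1)}:\norm{B^{(H)}z}{\ell^2}\le\delta,\ \norm{z}{H^1(0,1)}\le E}\le E\,\Psi(\delta/E)$ recorded at the end of Remark~\ref{rem:modul}, gives $\norm{Jx}{L^2(0,1)}\le E\,C_1/\ln(E/\delta)\le C_0/\ln(1/\delta)$ for small~$\delta$. So the whole content is~(a).

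For~(a) let $P_N$ denote the $L^2(0,1)$-orthogonal projection onto the polynomials of degree $<N$, i.e.\ onto the span of the first $N$ shifted Legendre polynomials, and split $x=P_Nx+(I-P_N)x$. Two facts drive the estimate. First, a Jackson-type estimate for Legendre expansions gives $\norm{(I-P_N)x}{L^2(0,1)}\le c\,N^{-k}\norm{x}{H^k(0,1)}\le c\,N^{-k}$, which handles the high-degree part by smoothness alone. Second, write $P_Nx=\sum_{j=1}^N c_j\,t^{j-1}$; since $t^{j-1}$ has degree $<N$ for $j\le N$, orthogonality of $P_N$ yields $\scalar{P_Nx}{t^{j-1}}=\scalar{x}{t^{j-1}}=[B^{(H)}x]_j$ for $j\le N$, so the first $N$ moments of $P_Nx$ coincide with those of~$x$.

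Consequently, using $\norm{B^{(H)}x}{\ell^2}\le\delta$,
\[
\norm{P_Nx}{L^2(0,1)}^2=\sum_{j=1}^N c_j\,[B^{(H)}x]_j\le \norm{c}{\ell^2}\,\delta ,
\]
while $\norm{P_Nx}{L^2(0,1)}^2=c^\top H_N c\ge \lambda_{\min}(H_N)\,\norm{c}{\ell^2}^2$, where $H_N=\lr{1/(i+j-1)}_{i,j=1}^N$ is the finite Hilbert matrix. Combining the two lines gives $\norm{P_Nx}{L^2(0,1)}\le \lambda_{\min}(H_N)^{-1/2}\,\delta$. The decisive quantitative input is the classical exponential decay of the smallest eigenvalue, $\lambda_{\min}(H_N)\ge c\,q^{N}$ with $q=(\sqrt2-1)^4\in(0,1)$, so that $\lambda_{\min}(H_N)^{-1/2}\le C\rho^{N/2}$ with $\rho:=1/q>1$. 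Choosing $N\asymp \ln(1/\delta)/\ln\rho$ makes $\rho^{N/2}\delta\le C\,\delta^{1/2}$ negligible against the smoothness term, while $N^{-k}\asymp(\ln\rho)^k\,(\ln(1/\delta))^{-k}$. Adding the two contributions yields $\norm{x}{L^2(0,1)}\le C_k\,(\ln(1/\delta))^{-k}$ for sufficiently small~$\delta$; the threshold on~$\delta$ accounts for $N\ge1$ and the onset of the eigenvalue asymptotics.

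I expect the main obstacle to be exactly this exponential ill-conditioning of the Hilbert matrix: it is the fast (geometric) vanishing of $\lambda_{\min}(H_N)$ that forces a merely logarithmic modulus of continuity, and its interplay with the algebraic approximation order $N^{-k}$ is precisely what produces the exponent~$k$ on the logarithm. Both of the quantitative inputs, the eigenvalue asymptotics of $H_N$ and the Legendre approximation bound, can alternatively be read off from the analysis of the composition $B^{(H)}\circ\mathcal E^{(k)}$ in~\cite{GHHK21}.
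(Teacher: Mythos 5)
Your proposal is correct and follows essentially the same route as the paper: the same splitting of $x$ by the Legendre projection, the same two quantitative inputs (the $N^{-k}$ Legendre approximation bound and the exponential growth of the inverse Hilbert matrix, your $\lambda_{\min}(H_N)\ge c\,(\sqrt2-1)^{4N}$ being exactly the paper's $\|H_n^{-1}\|\le\hat C\exp(4\ln(1+\sqrt2)\,n)$), and the same logarithmic equilibration of $N$. The only variations are in packaging: you derive $\|P_Nx\|_{L^2(0,1)}\le\lambda_{\min}(H_N)^{-1/2}\,\delta$ directly by moment matching and Cauchy--Schwarz where the paper routes this through Lemma~\ref{lem:PAQ} via Bernstein numbers, and you obtain part (b) by a formal reduction to part (a) with $k=1$ using the rescaling of Remark~\ref{rem:modul}, where the paper reruns the splitting for $z=Jx$ with the sharper $k=1$ Legendre bound.
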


The proof will be along the lines of~\cite{Tal87}, and we shall state
the key points here. The analysis will be based on the (normalized) shifted
Legendre polynomials~ $\{L_j\}_{j=1}^\infty$ with the explicit representation
\begin{equation}
  \label{eq:legendre}
L_j(t)=\frac{\sqrt{2j-1}}{(j-1)!}\left(\frac{d}{dt} \right)^{j-1} t^{j-1}(1-t)^{j-1} \qquad (t \in [0,1],\;j=1,2,...)
\end{equation}
The system $\{L_j\}_{j=1}^\infty$ is the result of the Gram-Schmidt
orthonormalization process of the system $\{t^{j-1}\}_{j=1}^\infty$ of
monomials. Consequently, we have
\begin{equation}
  \label{eq:span-span}
{\operatorname{span}}(1,t,...,t^{N-1})={\operatorname{span}}(L_1,L_2,...,L_N).
\end{equation}
These polynomials form an orthonormal basis in~$L^{2}(0,1)$, and we denote~$Q_{n}$ the orthogonal
projections onto the span~$\mathcal D(Q_{n})\subset L^2(0,1)$ of the
first~$n$ Legendre polynomials, and~$P_{n}$ the projection onto the
first~$n$ unit basis vectors in~$\ell^{2}$.
\begin{lemma}\label{lem:PAQ}
  For the Hausdorff moment operator~$B=B^{(H)}$ from~(\ref{eq:Haus}) the following holds true.
  \begin{enumerate}
  \item[(I)] $P_{n} B Q_{n} = P_{n}B$,
  \item[(II)] $P_{n} B B^{\ast}P_{n} = H_{n}$
    with~$H_{n}\colon \ell^{2}_{n}\to \ell^{2}_{n}$ being the
    $n$-dimensional segment of the Hilbert matrix,
  \item[(III)]
    $\norm{Q_{n}x}{L^{2}(0,1)} \leq \frac{\norm{P_{n} B
        x}{\ell^{2}}}{\sigma_{n}(P_{n}B)}$, and
  \item[(IV)]
    $\sigma_{n}(P_{n}B) = \norm{H_{n}^{-1}}{\ell^{2}_{n}\to
      \ell^{2}_{n}}^{1/2}$.
  \end{enumerate}
  Consequently we have
  that~$\norm{Q_{n}x}{L^{2}(0,1)}
  \leq{\norm{H_{n}^{-1}}{\ell^{2}_{n}\to
      \ell^{2}_{n}}^{1/2}}{\norm{P_{n} B x}{\ell^{2}}}$.
\end{lemma}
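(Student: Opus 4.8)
The plan is to first compute the adjoint $B^\ast$ explicitly and then verify the four items in order, the first two being direct computations and the last two a finite-dimensional linear-algebra reduction. I would begin by recording that for $w\in\ell^{2}$ one has $[B^\ast w](t)=\sum_{j\ge 1}w_{j}\,t^{j-1}$, obtained by writing out $\scalar{Bz}{w}_{\ell^{2}}$ and interchanging the sum with the moment integral. For item~(I) the key observation is that each monomial $t^{j-1}$ with $j\le n$ lies in $\mathcal D(Q_{n})=\operatorname{span}(1,t,\dots,t^{n-1})$ by~(\ref{eq:span-span}); hence for such $j$ the moment $[Bz]_{j}=\scalar{z}{t^{j-1}}_{L^{2}(0,1)}$ is unchanged when $z$ is replaced by $Q_{n}z$, because $(I-Q_{n})z$ is orthogonal to $\mathcal D(Q_{n})$. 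Reading this off in the first $n$ coordinates yields $P_{n}B=P_{n}BQ_{n}$. For item~(II), composing the formula for $B^\ast$ with $B$ gives the $(i,j)$ entry $\scalar{t^{j-1}}{t^{i-1}}_{L^{2}(0,1)}=\int_{0}^{1}t^{i+j-2}\,dt=1/(i+j-1)$, so the compression $P_{n}BB^\ast P_{n}$ is exactly the $n\times n$ segment $H_{n}$ of the Hilbert matrix.

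For items~(III) and~(IV) I would reduce everything to the finite-dimensional operator obtained by restricting $P_{n}B$ to $\mathcal D(Q_{n})$. By~(I) the operator factors as $P_{n}B=(P_{n}B)Q_{n}$ and therefore has rank at most $n$, with its nonzero singular values coinciding with those of this restriction. The restriction is injective: if $p\in\operatorname{span}(1,\dots,t^{n-1})$ satisfies $P_{n}Bp=0$, then writing $p=\sum_{k}c_{k}t^{k-1}$ turns the $n$ vanishing moments into $H_{n}c=0$, and since $H_{n}$ is symmetric positive definite it is nonsingular, forcing $c=0$. Consequently all $n$ singular values $\sigma_{1}(P_{n}B)\ge\dots\ge\sigma_{n}(P_{n}B)$ are strictly positive, and for any $x$ the factorization $P_{n}Bx=(P_{n}B)(Q_{n}x)$ together with the smallest-singular-value bound $\norm{Q_{n}x}{L^{2}(0,1)}\le\sigma_{n}(P_{n}B)^{-1}\norm{P_{n}Bx}{\ell^{2}}$ gives~(III). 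For item~(IV) the squared singular values of $P_{n}B$ are the eigenvalues of $(P_{n}B)(P_{n}B)^\ast=P_{n}BB^\ast P_{n}=H_{n}$ from~(II); hence the smallest satisfies $\sigma_{n}(P_{n}B)^{2}=\lambda_{\min}(H_{n})=\norm{H_{n}^{-1}}{\ell^{2}_{n}\to\ell^{2}_{n}}^{-1}$, so that $\sigma_{n}(P_{n}B)=\norm{H_{n}^{-1}}{\ell^{2}_{n}\to\ell^{2}_{n}}^{-1/2}$. Substituting this into~(III) produces the concluding inequality $\norm{Q_{n}x}{L^{2}(0,1)}\le\norm{H_{n}^{-1}}{\ell^{2}_{n}\to\ell^{2}_{n}}^{1/2}\norm{P_{n}Bx}{\ell^{2}}$.

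The only genuinely delicate point is the bookkeeping in~(III)--(IV): one must track that $\sigma_{n}(P_{n}B)$ denotes the smallest of the $n$ nonzero singular values and that it is strictly positive, which is precisely where the positive definiteness of the Hilbert matrix $H_{n}$ enters, both to guarantee injectivity of the restriction and to invert the relation between $\lambda_{\min}(H_{n})$ and $\norm{H_{n}^{-1}}{\ell^{2}_{n}\to\ell^{2}_{n}}$. Everything else is a direct translation between the moment pairing and the entries of $H_{n}$.
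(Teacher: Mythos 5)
Your proof is correct, and for items (I), (II) and the concluding inequality it runs essentially parallel to the paper's argument: the paper disposes of (I) by noting that the Gram--Schmidt change of basis from the monomials to the Legendre polynomials is triangular, and of (II) by citing \cite[Prop.~4]{GHHK21}, whereas you verify both by writing out $B^{\ast}w=\sum_{j}w_{j}t^{j-1}$ and computing the moment pairings directly --- a harmless difference. The genuine divergence is in item (III): the paper identifies $\inf_{0\neq z\in\mathcal D(Q_{n})}\norm{P_{n}Bz}{\ell^{2}}/\norm{z}{L^{2}(0,1)}$ with the $n$-th Bernstein number of $P_{n}B$ and invokes the coincidence of all $s$-numbers on Hilbert spaces \cite[Thm.~4.5]{Pie74}, while you reduce everything to finite-dimensional linear algebra: by (I) the operator $P_{n}B$ factors through $\mathcal D(Q_{n})$, the restriction to $\mathcal D(Q_{n})$ is injective because $H_{n}$ is positive definite, so $P_{n}B$ has rank exactly $n$ and the smallest nonzero singular value satisfies $\norm{P_{n}BQ_{n}x}{\ell^{2}}\geq\sigma_{n}(P_{n}B)\,\norm{Q_{n}x}{L^{2}(0,1)}$, which is (III). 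Your route is more elementary and self-contained; the paper's is shorter but leans on $s$-number theory. One remark on (IV): your computation yields $\sigma_{n}(P_{n}B)=\norm{H_{n}^{-1}}{\ell^{2}_{n}\to\ell^{2}_{n}}^{-1/2}$, which is exactly what the paper's own proof derives ($\sigma_{n}^{2}(P_{n}B)=\sigma_{n}(H_{n})=1/\norm{H_{n}^{-1}}{\ell^{2}_{n}\to\ell^{2}_{n}}$) and what is needed for the concluding inequality; the exponent $+1/2$ in the displayed statement of item (IV) is evidently a typo in the lemma, so your version is the correct one and no gap arises from it.
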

\begin{proof}
  The first assertion (I) is easily checked and it results from the fact
  that the Gram-Schmidt matrix for turning from the monomials to the
  Legendre coefficients, see~(\ref{eq:span-span}), is lower triangular. The second assertion (II) was
  shown in~\cite[Prop.~4]{GHHK21}. The final assertion (IV) is a
  re-statement of
$$
\sigma_{n}(P_{n}B) \leq \frac{\norm{P_{n} B
    x}{\ell^{2}}}{\norm{Q_{n}x}{L^{2}(0,1)}},\quad x\neq 0.
$$
In view of the first item (I) it is enough to prove that
$$
\sigma_{n}(P_{n}B) \leq \inf_{0 \neq z\in \mathcal
  D(Q_{n})}\frac{\norm{P_{n} B z}{\ell^{2}}}{\norm{z}{L^{2}(0,1)}}.
$$
It is well known from approximation theory that
$$
\sigma_{n}(P_{n}B) = \inf_{X_{n}:\, \dim(X_{n}=n)}\;\inf_{0 \neq z\in
  \mathcal D(Q_{n})}\frac{\norm{P_{n} B
    z}{\ell^{2}}}{\norm{z}{L^{2}(0,1)}}.
 $$
 Indeed, the right-hand side above corresponds to the definition of
 the \emph{Bernstein numbers}, which constitute an $s$-number,
 see~\cite[Thm.~4.5]{Pie74}, and this proves item (III). The last
 item (IV) follows from
$$
\sigma_{n}^{2}(P_{n}B) = \sigma_{n}(P_{n} B B^{\ast}P_{n}) =
\sigma_{n}(P_{n} H P_{n}) = \sigma_{n}(H_{n}) = \frac 1
{\norm{H_{n}^{-1}}{{\ell^{2}_{n}\to
      \ell^{2}_{n}}}},
$$
which in turn yields the final assertion. The proof is complete.
\end{proof}

The next result concerns the approximation power of smooth functions
by Legendre polynomials.
\begin{lemma}\label{lem:AP-Legendre}
  For functions~$x\in H^{k}(0,1)$ there is a constant~$K_{k}$ such
  that
  \begin{equation} \label{eq:rN} \|(I - Q_{n})x\|_{L^2(0,1)} \le
    K_k\, \frac 1 {n^{k}}\quad (n\in\N).
  \end{equation}
  For~$k=1$ and hence~$x\in H^{1}(0,1)$ this my be specified as
$$
\|(I - Q_{n}) x\|_{L^2(0,1)} \le
\frac{\|x^\prime\|_{L^2(0,1)}}{2n} 
\quad (n\in\N).
$$
\end{lemma}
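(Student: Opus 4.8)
The plan is to expand $x$ in the orthonormal basis $\{L_j\}_{j\ge1}$, writing $x=\sum_{j\ge1}c_jL_j$ with $c_j=\scalar{x}{L_j}$, so that $\norm{(I-Q_{n})x}{L^2(0,1)}^2=\sum_{j>n}c_j^2$ and the task reduces to bounding this tail. The engine is the Sturm--Liouville structure of the shifted Legendre polynomials: writing $\mathcal{L}u:=-\frac{d}{dt}\bigl(t(1-t)u'\bigr)$, one has $\mathcal{L}L_j=\lambda_jL_j$ with $\lambda_j=(j-1)j$, and the associated symmetric bilinear form $a(u,v):=\int_0^1 t(1-t)\,u'(t)v'(t)\,dt$ satisfies $a(L_j,v)=\lambda_j\scalar{L_j}{v}$ for every $v\in H^1(0,1)$. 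The boundary contributions produced by integration by parts vanish because the weight $t(1-t)$ vanishes at both endpoints while $L_j'$ is a bounded polynomial.

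First I would treat the case $k=1$, which is the heart of the matter. Applying the form identity with $v=x$ gives $\lambda_jc_j=a(L_j,x)=\int_0^1 t(1-t)\,L_j'\,x'\,dt$. Setting $h:=\sqrt{t(1-t)}\,x'$ and $e_j:=\lambda_j^{-1/2}\sqrt{t(1-t)}\,L_j'$ (for $j\ge2$), the relation $a(L_j,L_\ell)=\lambda_j\delta_{j\ell}$ shows that $\{e_j\}_{j\ge2}$ is an orthonormal system in $L^2(0,1)$, and one reads off $\sqrt{\lambda_j}\,c_j=\scalar{h}{e_j}$. Bessel's inequality together with the monotonicity of $\lambda_j$ then yields
\[
\sum_{j>n}c_j^2=\sum_{j>n}\frac{\scalar{h}{e_j}^2}{\lambda_j}\le\frac{1}{\lambda_{n+1}}\sum_{j>n}\scalar{h}{e_j}^2\le\frac{\norm{h}{L^2(0,1)}^2}{\lambda_{n+1}}.
\]
Since $\norm{h}{L^2(0,1)}^2=\int_0^1 t(1-t)(x')^2\,dt\le\tfrac14\norm{x'}{L^2(0,1)}^2$ and $\lambda_{n+1}=n(n+1)\ge n^2$, this gives exactly $\norm{(I-Q_{n})x}{L^2(0,1)}\le\norm{x'}{L^2(0,1)}/(2n)$.

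For general $k$ I would iterate. The self-adjointness of $\mathcal{L}$ (again with vanishing boundary terms) shows that the $j$-th Legendre coefficient of $\mathcal{L}x$ equals $\lambda_jc_j$; hence the coefficients of $\mathcal{L}^mx$ are $\lambda_j^mc_j$. For even $k=2m$ and $x\in H^{2m}(0,1)$ one has $\mathcal{L}^mx\in L^2(0,1)$, whence, by Parseval, $\sum_{j>n}c_j^2\le\lambda_{n+1}^{-2m}\norm{\mathcal{L}^mx}{L^2(0,1)}^2\le n^{-4m}\norm{\mathcal{L}^mx}{L^2(0,1)}^2$, i.e.\ the rate $n^{-k}$. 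For odd $k=2m+1$ and $x\in H^{2m+1}(0,1)$ one has $\mathcal{L}^mx\in H^1(0,1)$, and applying the case $k=1$ to $\mathcal{L}^mx$ (whose coefficients are $\lambda_j^mc_j$) together with the extra factor $\lambda_{n+1}^{-2m}\le n^{-4m}$ again produces the rate $n^{-k}$. In both cases the constant $K_k$ stems from the continuity estimate $\norm{\mathcal{L}^mx}{L^2(0,1)}\le C_k\,\norm{x}{H^k(0,1)}$, valid because $\mathcal{L}^m$ is a differential operator of order $2m\le k$ with polynomial coefficients.

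The main obstacle is the rigorous handling of the iteration for merely $H^k$ (rather than smooth) functions: each application of $\mathcal{L}$ lowers the Sobolev order by two and involves the degenerate weight $t(1-t)$, so one must verify that the intermediate functions $\mathcal{L}^ix$ lie in $H^2$, which legitimizes the identity $\scalar{\mathcal{L}^{i+1}x}{L_j}=\lambda_j\scalar{\mathcal{L}^ix}{L_j}$ and guarantees that all boundary terms vanish. This is ensured by the bookkeeping $\mathcal{L}^ix\in H^{k-2i}$; alternatively, one may first establish the estimate for $x\in C^\infty([0,1])$, where all manipulations are classical, and then pass to $H^k(0,1)$ by density, since both sides of the inequality are continuous in the $H^k(0,1)$-norm.
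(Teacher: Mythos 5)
Your proof is correct and complete. Note that the paper does not actually prove this lemma: it only points to \cite[Thm.~4.1 and Rem.~4.1]{Angbook02}, \cite[Thm.~2.5]{Wang12} and \cite[Eq.~(27)]{Tal87}, so your self-contained argument is a genuine derivation rather than a reproduction of anything in the text. The mechanism you use --- the Sturm--Liouville relation $-\bigl(t(1-t)L_j'\bigr)'=\lambda_jL_j$ with $\lambda_j=(j-1)j$, the orthonormal system $e_j=\lambda_j^{-1/2}\sqrt{t(1-t)}\,L_j'$, and Bessel's inequality applied to $h=\sqrt{t(1-t)}\,x'$ --- is essentially what underlies the cited results (it is the $[0,1]$-transplant of the Wang--Xiang analysis on $(-1,1)$), and it reproduces Talenti's sharp constant exactly: $\sum_{j>n}c_j^2\le\lambda_{n+1}^{-1}\|h\|^2\le\|x'\|^2/(4n(n+1))\le\|x'\|^2/(4n^2)$, with no division-by-zero issue since the tail starts at $j=n+1\ge2$ where $\lambda_j>0$. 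The iteration $\langle\mathcal{L}^{i+1}x,L_j\rangle=\lambda_j\langle\mathcal{L}^ix,L_j\rangle$ for general $k$ is legitimate under the bookkeeping $\mathcal{L}^ix\in H^{k-2i}$ that you state (the boundary terms vanish because $t(1-t)(\mathcal{L}^ix)'$ is continuous and vanishes at the endpoints whenever $\mathcal{L}^ix\in H^2$), and the density argument you offer as an alternative is the cleanest way to close that point. Two minor remarks: your constant is $K_k=C_k\|x\|_{H^k(0,1)}$, i.e.\ it depends on $x$ through its Sobolev norm, which is exactly how the lemma is used in the paper (there $\|x\|_{H^k(0,1)}\le1$); and in the even case the bound $\lambda_{n+1}^{-2m}\le n^{-4m}=n^{-2k}$ gives the claimed rate $n^{-k}$ after taking square roots, as you indicate.
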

\begin{remark}In~\cite[Thm.~4.1]{Angbook02} the proof of~(\ref{eq:rN})
  is given for~$k=1$. In ibid.~Remark~4.1 the extension for other values of~$k$
  is stated without explicit proof. In~\cite[Thm.~2.5]{Wang12} a proof
  is given for the Legendre polynomials on the interval~$(-1,1)$,
  based on ibid.~Theorem~2.1 which describes the decay rates of the
  expansions in terms of Legendre polynomials for functions with
  Sobolev type smoothness. The specification in the second bound is
  taken from~\cite[Eq.~(27)]{Tal87}.
\end{remark}

Based on the above preparations we turn to the
\begin{proof}[Proof of Theorem~\ref{thm:Hausdorff}]
  For both assertions (a) and (b) we are going to use a decomposition of the form
  \begin{equation}
    \label{eq:z-deco}
    \norm{z}{L^{2}(0,1)} \leq \norm{Q_{n}z}{L^{2}(0,1)} +  \norm{(I - Q_{n})z}{L^{2}(0,1)}
  \end{equation}
  where~$Q_{n}$ is the orthogonal projection on the span of the first~$n$
  Legendre polynomials.

  For the first assertion (a) we let~$z:= x$, and we bound each summand.
  Recall that here~$\mathcal E^{(k)}$ is the natural embedding with $\mathcal
E^{(k)} x = x$ for all $x \in H^k(0,1)$. Thus, by Lemma~\ref{lem:PAQ} the first summand
  is bounded as
  $$
\norm{Q_{n}x}{L^{2}(0,1)} \leq \delta \norm{H_{n}^{-1}}{\ell^{2}_{n}\to
\ell^{2}_{n}}^{1/2}.
$$
From~\cite{Todd54,Wilf70} and \cite{Beckermann00} we know that there
is a constant~$\hat C$, independent of $n$, for which
$$\|H_n^{-1}\|_{\ell^{2}_{n}\to \ell^{2}_{n}} \le \hat
C\exp(4\ln(1+\sqrt{2})\,n) \le \hat C\exp(4n).
$$
This yields
\begin{equation} \label{eq:firstsum} \|Q_{n} x\|_{L^2(0,1)} \le
  \sqrt{\hat C}\exp(2n)\,\delta.
\end{equation}
The second summand in~(\ref{eq:z-deco}) is bounded in
Lemma~\ref{lem:AP-Legendre}, and altogether we find that
\begin{equation} \label{eq:total} \|x\|_{L^2(0,1)} \le \sqrt{\hat
  C}\exp(2n)\,\delta + K_k\,\frac{1}{n^{k}},
\end{equation}
We choose an integer~$N = n(\delta)$ such that the two terms on the right-hand
side of the estimate \eqref{eq:total} are equilibrated.
This is achieved by letting~$N$ be given from
$$
N = \lfloor \frac 1 4 \ln(1/\delta)\rfloor+1,\quad 0 < \delta \leq \exp(-4).
$$
Substituting $n:=N$ in~\eqref{eq:total} yields for sufficiently small
$\delta>0$
the final estimate
$$\|x\|_{L^2(0,1)} \le \frac{C_k}{\left(\ln \left( 1/\delta\right)\right)^k}, $$
with some positive constant
$C_k$ depending on $k$.

For proving the second assertion (b) we assign~$z:= Jx$. Then the  first
summand in~(\ref{eq:z-deco}) allows for an estimate of the form
$$\|Q_{n}(Jx)\|_{L^2(0,1)}\le \sqrt{\hat C} \exp(2n)\, \delta, $$
again for some constant $\hat C>0$. For bounding~$\norm{(I -
  Q_{n})(Jx)}{L^{2}(0,1)}$ we use the second estimate in
Lemma~\ref{lem:AP-Legendre} which gives, for~$\|x\|_{L^2(0,1)} \le 1$,
the bound
$$
\|(I - Q_{n}) (Jx)\|_{L^2(0,1)} \le \frac{\|x\|_{L^2(0,1)}}{2n} \leq
\frac 1 {2n}.
$$
Then we can proceed as for the first assertion
in order to
complete the proof of the second assertion, and of the theorem.
\end{proof}

The proof formulated above is an alternative to the proof of
\cite[Theorem~1]{GHHK21} for $k=1$ and an extension to the cases
$k=2,3,...$. Consequences of Theorems~\ref{thm:general} and
\ref{thm:Hausdorff} for the singular value decay rate of the Hausdorff
moment composite operator $A:=B^{(H)} \circ \mathcal{E}^{(k)}$ are
summarized in the following corollary.
\begin{corollary} \label{cor:Hausdorff} For the composite Hausdorff
  moment problem~$B^{(H)}\circ \mathcal E^{(k)}$
  there exist
  positive constants $C_k,\; \underline{C}$ and $\overline C$ such
  that
  \begin{equation*} 
    \exp(-\underline{C}\,i) \le
    \exp\left(-\left(\frac{C_k}{ \sigma_i(\mathcal
          E^{(k)})}\right)^{\frac1k} \right)
    \le \sigma_i(B^{(H)}\circ \mathcal E^{(k)}) \le \sqrt{\pi}\,\sigma_i\lr{\mathcal E^{(k)}} \le
    \frac{\overline{C}}{i^k}
  \end{equation*}
  is valid for sufficiently large indices $i \in \mathbb{N}$.
\end{corollary}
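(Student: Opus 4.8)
The plan is to verify the four-fold chain link by link, since each link rests on a separate ingredient already in place; I would work from the outside inward.

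For the two rightmost links, the bound $\sigma_i(B^{(H)}\circ \mathcal E^{(k)}) \le \sqrt{\pi}\,\sigma_i(\mathcal E^{(k)})$ is the multiplicativity estimate $\sigma_i(B\circ D)\le \|B\|\,\sigma_i(D)$ for bounded $B$, together with the value $\|B^{(H)}\|_{L^2(0,1)\to\ell^2}=\sqrt{\pi}$ (cf.~\cite{Inglese92}, as noted in the Remark following~\eqref{eq:SVDcon}). The outermost link $\sqrt{\pi}\,\sigma_i(\mathcal E^{(k)})\le \overline C/i^k$ then follows from the classical singular-value asymptotics $\sigma_i(\mathcal E^{(k)})\asymp i^{-k}$ of the Sobolev embedding, with $\overline C$ chosen as $\sqrt{\pi}$ times the corresponding upper constant.

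For the central link, Theorem~\ref{thm:general} does the work. I would apply it with $X=H^k(0,1)$, $Z=L^2(0,1)$, $Y=\ell^2$, inner operator $\mathcal E^{(k)}$, and composite operator $A=B^{(H)}\circ \mathcal E^{(k)}$. Since $\mathcal E^{(k)}$ is the embedding, $\|\mathcal E^{(k)}x\|_{L^2(0,1)}=\|x\|_{L^2(0,1)}$, so the supremum on the left-hand side of~\eqref{eq:supH} in Theorem~\ref{thm:Hausdorff}(a) is exactly the one appearing in the conditional stability estimate~\eqref{eq:condstab}, with majorant index function $\Psi(\delta)=C_k/(\ln(1/\delta))^k$. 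After checking that $\Psi$ is genuinely an index function near the origin---it is increasing there and tends to $0$ as $\delta\to 0$---the estimate~\eqref{eq:estinv} gives $\Psi^{-1}(\sigma_i(\mathcal E^{(k)}))\le \sigma_i(A)$. The one computation is the inversion: solving $y=C_k/(\ln(1/\delta))^k$ for $\delta$ yields $\Psi^{-1}(y)=\exp(-(C_k/y)^{1/k})$, and substituting $y=\sigma_i(\mathcal E^{(k)})$ reproduces the central term of the chain.

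Finally, the leftmost link and the qualifier. The inequality $\exp(-\underline C\,i)\le \exp(-(C_k/\sigma_i(\mathcal E^{(k)}))^{1/k})$ is equivalent to $(C_k/\sigma_i(\mathcal E^{(k)}))^{1/k}\le \underline C\,i$, which follows from the lower half of the asymptotics $\sigma_i(\mathcal E^{(k)})\ge c\,i^{-k}$ upon setting $\underline C=(C_k/c)^{1/k}$. Because~\eqref{eq:supH} is assumed only for sufficiently small $\delta$, Remark~\ref{rem:sufficient} shows that the resulting singular-value inequalities hold only for sufficiently large $i$, matching the qualification in the statement. I expect no step to pose a serious obstacle: the real content sits in Theorems~\ref{thm:general} and~\ref{thm:Hausdorff}, and the only point demanding care is the clean inversion of the slowly varying logarithmic index function $\Psi$, together with the verification that it qualifies as an index function so that~\eqref{eq:estinv} applies.
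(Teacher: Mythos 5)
Your proposal is correct and follows essentially the same route as the paper: the upper bounds come from $\sigma_i(B\circ D)\le\|B\|\,\sigma_i(D)$ with $\|B^{(H)}\|=\sqrt{\pi}$ and the embedding asymptotics $\sigma_i(\mathcal E^{(k)})\asymp i^{-k}$, while the central lower bound is Theorem~\ref{thm:general} applied with the conditional stability estimate~\eqref{eq:supH} and the inversion $\Psi^{-1}(t)=\exp(-(C_k/t)^{1/k})$. Your explicit treatment of the leftmost link and the appeal to Remark~\ref{rem:sufficient} for the ``sufficiently large $i$'' qualifier only make explicit what the paper leaves implicit.
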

\begin{proof}
  Taking into account the well-known
  singular value asymptotics $\sigma_i\lr{\mathcal E^{(k)}} \asymp
  i^{-k}$ as~$i\to\infty$ (cf.~\cite[\S3.c]{Koenig86}) and the norm
  $\|B^{(H)}\|_{L^2(0,1)\to \ell^2}=\sqrt{\pi}$, we simply find
  for the composition~$A = B^{(H)} \circ \mathcal E^{(k)}$ the estimates from above
  $$\sigma_i(B^{(H)}\circ \mathcal E^{(k)}) \le \sqrt{\pi}\,\sigma_i\lr{\mathcal E^{(k)}}\, \le
  \frac{\overline{C}}{i^k},$$ with some positive constant
  $\overline{C}$.

  We need to show the lower bounds, and we are going to apply
  Theorem~\ref{thm:general} in combination with the estimate \eqref{eq:supH} from
  Theorem~\ref{thm:Hausdorff}.
  To do so we set 
  $X:=H^k(0,1)$, $Z:=L^2(0,1)$,
  $Y:=\ell^2$, as well as~$D:=\mathcal E^{(k)}$, $A:=B^{(H)}\circ
  \mathcal E^{(k)}$, and~$\Psi(\delta):= \frac{C_k}{(\ln(1/\delta))^k}$ for sufficiently
  small $\delta>0$. This function has the inverse~$\Psi^{-1}(t)=\exp\left(-\left(\frac{C_k}{t}\right)^{1/k}\right).$
  Then the conditional stability estimate \eqref{eq:condstab} attains
  the form~\eqref{eq:supH}, and we derive from \eqref{eq:estinv} that
$$ \exp\left(-\left(\frac{C_k}{ \sigma_i(\E)}\right)^{1/k} \right)
=\Psi^{-1}(\sigma_i(\mathcal E^{(k)})) \le \sigma_i\lr{B^{(H)}\circ
  \mathcal E^{(k)}} $$
for sufficiently large indices $i \in \mathbb{N}$. This completes the
proof.
\end{proof}

Theorem~\ref{thm:general} also applies to the composition~$B^{(H)}\circ
J$, and yields along the lines of the proof of
Corollary~\ref{cor:Hausdorff} the following result.

\begin{corollary} \label{cor:HausdorffJ}
For the composite Hausdorff moment problem~$B^{(H)}\circ J$  there exist positive constants $\underline{C}$ and $\overline C$ such that
\begin{equation} \label{eq:rough1}
\exp(-\underline{C}\,i) \le \sigma_i\lr{B^{(H)}\circ J} \le \frac{\overline{C}}{i}
\end{equation}
is valid for sufficiently large indices $i \in \N$.
\end{corollary}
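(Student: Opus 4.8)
The plan is to establish the two bounds separately, specializing the argument already used for Corollary~\ref{cor:Hausdorff} to the case where the compact factor is the integration operator $J$. For the upper bound I would simply invoke the remark following~\eqref{eq:SVDcon}: since $\|B^{(H)}\|_{L^2(0,1)\to\ell^2}=\sqrt\pi$, boundedness of $B^{(H)}$ gives $\sigma_i(B^{(H)}\circ J)\le\sqrt\pi\,\sigma_i(J)$, and combining this with the classical asymptotics $\sigma_i(J)\asymp 1/i$ yields $\sigma_i(B^{(H)}\circ J)\le \overline C/i$ for a suitable $\overline C$ and all sufficiently large $i$.

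For the lower bound I would apply Theorem~\ref{thm:general} in tandem with the conditional stability estimate~\eqref{eq:supJ} of Theorem~\ref{thm:Hausdorff}(b). Concretely, set $X:=Z:=L^2(0,1)$, $Y:=\ell^2$, $D:=J$, and $A:=B^{(H)}\circ J$, and take $\Psi(\delta):=C_0/\ln(1/\delta)$ for sufficiently small $\delta>0$. With this identification the hypothesis~\eqref{eq:condstab} is precisely~\eqref{eq:supJ}. The inverse of $\Psi$ is computed by solving $t=C_0/\ln(1/\delta)$, giving $\Psi^{-1}(t)=\exp(-C_0/t)$, whence~\eqref{eq:estinv} delivers $\exp(-C_0/\sigma_i(J))\le\sigma_i\lr{B^{(H)}\circ J}$. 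It then remains to convert this into a clean exponential bound: using the lower asymptotic $\sigma_i(J)\ge\underline c/i$ one has $C_0/\sigma_i(J)\le (C_0/\underline c)\,i=:\underline C\,i$, and since the exponential is increasing this yields $\exp(-\underline C\,i)\le\exp(-C_0/\sigma_i(J))\le\sigma_i\lr{B^{(H)}\circ J}$, as required.

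Since~\eqref{eq:supJ} holds only for small $\delta$, Remark~\ref{rem:sufficient} guarantees that the resulting singular value inequality is valid for sufficiently large $i$, matching the statement~\eqref{eq:rough1}. The only point requiring a little care is the verification of the standing hypotheses of Theorem~\ref{thm:general}, namely that both $J$ and $B^{(H)}\circ J$ are compact with infinite dimensional, non-closed range; this follows from compactness of $J$, boundedness of $B^{(H)}$, and injectivity of both factors, the injectivity of $B^{(H)}$ being the fact that an $L^2$ function with all vanishing moments is zero. I do not expect a genuine obstacle here: the entire argument is essentially a verbatim transcription of Corollary~\ref{cor:Hausdorff} to the integration setting, the one substantive ingredient—the logarithmic conditional stability—having already been supplied by Theorem~\ref{thm:Hausdorff}(b).
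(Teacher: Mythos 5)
Your proposal is correct and follows exactly the route the paper intends: the paper gives no separate proof of Corollary~\ref{cor:HausdorffJ}, stating only that it follows ``along the lines of'' Corollary~\ref{cor:Hausdorff}, which is precisely what you carry out, applying Theorem~\ref{thm:general} with $\Psi(\delta)=C_0/\ln(1/\delta)$ from Theorem~\ref{thm:Hausdorff}(b) and using $\sigma_i(J)\asymp 1/i$ together with the norm bound $\|B^{(H)}\|_{L^2(0,1)\to\ell^2}=\sqrt{\pi}$ for the upper estimate. Your verification of the standing hypotheses (non-closed ranges via injectivity and compactness) is a detail the paper leaves implicit, and it is correct.
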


 \begin{remark} \label{rem:added}
    The composite mapping~$B^{(H)}\circ J$ may be viewed as forward
    mapping when reconstructing the derivative of an unknown function
    from Hausdorff moments, captured in~$B^{(H)}$.
    The authors in~\cite{Luetal13,Zhao10} have discussed the reconstruction of an unknown function
    from Legendre moments, which will correspond to a composite
    mapping~$B^{(L)}\circ J$,
    with~$B^{(L)}\colon L^2(0,1) \to \ell^2 $, assigning to a
    function~$g\in L^2(0,1)$ the sequence of
    moments~$\int_{0}^{1} g(t) L_{i}(t)\; dt \;(i=1,2,\dots)$. In this
    case the mapping~$B^{(L)}$ constitutes a (non-compact) unitary operator, and we
    will have~$\sigma_i(B^{(L)} \circ J) = \sigma_i(J) \asymp 1/i\;(i=1,2,\dots).$
  \end{remark}

The gap between the lower and upper bounds for the singular values~$\sigma_i\lr{B^{(H)}\circ \mathcal E^{(k)}}$ and~$\sigma_i\lr{B^{(H)}\circ J}$
expressed in Corollaries~\ref{cor:Hausdorff} and~\ref{cor:HausdorffJ},
respectively, is quite large.
This gap does not allow us to decide whether the composite
  problems are \emph{moderately ill-posed} (when the upper bounds are
  realistic), or \emph{severely (exponentially) ill-posed} (when the lower bounds are
  realistic) (cf., e.g.,~\cite[Def.~8]{HofKin10}).

\section{Discussion of kernel smoothness}
\label{sec:kernel}

The composite operators that were considered so far are
Hilbert-Schmidt operators, because its factors $J$ and $\mathcal
E^{(k)}$, respectively, have this property. Hilbert-Schmidt operators
acting in~$L^{2}(0,1)$ are integral operators, and hence these can be
given in the form of a Fredholm integral operator~$[G(x)](s):=\int \limits_0^1
  k(s,t) x(t) dt\;(0 \le s \le 1)$ with kernel
  $k=k(s,t) \in L^2((0,1) \times (0,1)$.

  It is well-known that decay rates of the singular values grow with the smoothness of the kernel
  $k$, and we refer in this context to the following result.
\begin{lemma}[{see~\cite{Chang52}}] \label{lem:Allen}
  Consider in $L^2(0,1)$ the Fredholm integral operator~$[G(x)](s):=\int \limits_0^1
                                k(s,t) x(t) dt\;(0 \le s \le 1)$
 and assume that the kernel~$k$, and the derivatives~$\frac{\partial k}{\partial
    s}$,...,$\frac{\partial^{l-1}k}{\partial s^{l-1}}$ exist and are
  continuous in $s$ for almost all~$t$.
  Moreover, assume that there exist~$g \in L^2((0,1) \times (0,1))$
  and $V \in L^1(0,1)$ such that
\begin{equation} \label{eq:H1}
\frac{\partial^{l}k(s,t)}{\partial s^{l}}= \int \limits_0^s g(\tau,t)\, d\tau + V(t),
\end{equation}
Then we have
\begin{equation} \label{eq:Fredholmrates}
\sigma_i(G)=o\left( i^{-l-1.5}\right) \quad \mbox{as} \quad i \to \infty.
\end{equation}
\end{lemma}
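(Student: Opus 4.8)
The plan is to exploit the identity $\sigma_i(G)=a_i(G)$ between singular values and approximation numbers recorded in~\eqref{eq:s-nums}, and to produce good low-rank approximations of $G$ whose error is measured in the Hilbert--Schmidt norm rather than the operator norm. The gain of the extra half power in the exponent will \emph{not} come from the approximation itself, but from converting a Hilbert--Schmidt tail bound into a pointwise bound on a single singular value by monotonicity.

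First I would set up a local polynomial approximation in the variable $s$. For $n\in\N$ partition $[0,1]$ into the $n$ subintervals $I_j=[(j-1)/n,\,j/n]$ and, on each $I_j$, replace $k(\cdot,t)$ by its Taylor polynomial of degree $l$ in $s$ about the left endpoint. This is legitimate because the hypothesis~\eqref{eq:H1} says precisely that $\partial_s^{l}k(\cdot,t)$ is absolutely continuous in $s$ with a.e.\ derivative $\partial_s^{l+1}k(\cdot,t)=g(\cdot,t)$, so Taylor's formula with integral remainder applies. The resulting kernel $k_n$ is piecewise polynomial of degree $\le l$ in $s$, hence the associated integral operator $G_n$ has rank at most $n(l+1)$. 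Using the integral form of the remainder together with the Cauchy--Schwarz inequality on each $I_j$ (the factor $(s-\tau)^{l}$ gives $n^{-l}$, while the two integrations over an interval of length $1/n$ give a further $n^{-1}$ in the $L^2$ of the square), I obtain the Hilbert--Schmidt error bound $\norm{G-G_n}{HS}=\|k-k_n\|_{L^2((0,1)\times(0,1))}\le C\,n^{-(l+1)}\|g\|_{L^2}$. To upgrade this $\mathcal O$ to the required $o$, I would split $g=g_1+g_2$ with $g_1$ smooth and $\|g_2\|_{L^2}$ arbitrarily small; the smooth part admits a strictly faster rate and the rough part contributes $\le C\varepsilon\,n^{-(l+1)}$, so that $\norm{G-G_n}{HS}=o(n^{-(l+1)})$.

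Second, I would convert this into a bound on individual singular values. For any operator $L$ of rank $\le r$ the additivity of $s$-numbers gives $\sigma_{r+i}(G)\le\sigma_i(G-L)$, and summing over $i$ yields $\sum_{i>r}\sigma_i(G)^2\le\norm{G-L}{HS}^2$. Taking $L=G_n$ and $r=n(l+1)$ gives $\sum_{i>n(l+1)}\sigma_i(G)^2=o(n^{-2(l+1)})$. Since the $\sigma_i(G)$ are nonincreasing, $n(l+1)\,\sigma_{2n(l+1)}(G)^2\le\sum_{i=n(l+1)+1}^{2n(l+1)}\sigma_i(G)^2=o(n^{-2(l+1)})$, whence $\sigma_{2n(l+1)}(G)=o(n^{-(l+1)-1/2})$. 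Reindexing in $i$ turns this into $\sigma_i(G)=o(i^{-(l+1)-1/2})=o(i^{-l-3/2})$, which is~\eqref{eq:Fredholmrates}.

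The main obstacle is the first half of the second paragraph: producing an approximation error that decays like $o(n^{-(l+1)})$ in the Hilbert--Schmidt (i.e.\ $L^2$ of the square) norm at rank $O(n)$, which is exactly the step that transcribes the $s$-smoothness encoded in~\eqref{eq:H1} into an $L^2((0,1)\times(0,1))$ estimate. Working locally is essential here, since a global expansion of $k$ in a fixed orthonormal basis in $s$ would generate boundary contributions at $s=0,1$ (the value $V(t)=\partial_s^l k(0,t)$ being one such term) that degrade the attainable rate; the local Taylor device sidesteps these boundary terms entirely. Once the $o(n^{-(l+1)})$ Hilbert--Schmidt bound is in hand, the extra half power in~\eqref{eq:Fredholmrates} is essentially automatic, coming solely from the monotonicity/pigeonhole passage in the final step.
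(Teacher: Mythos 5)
The paper offers no proof of Lemma~\ref{lem:Allen}: it is quoted from \cite{Chang52}, so there is no internal argument to compare against. Your proposal is, as far as I can check, essentially correct, and it is noteworthy that it runs on exactly the machinery the paper itself uses later for the specific operator $B^{(H)}\circ J$: a finite-rank approximant whose error is controlled in the Hilbert--Schmidt norm, followed by the conversion $\sigma_{r+i}(G)\le\sigma_i(G-L)$, the summation $\sum_{i>r}\sigma_i(G)^2\le\norm{G-L}{HS}^2$ (this is Proposition~\ref{pro:Baumeister} with your piecewise Taylor operator $G_n$ in place of $AQ_n$), and the pigeonhole step of Lemma~\ref{lem:sj-kappa} that produces the extra half power. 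The local Taylor expansion is the right device: hypothesis~\eqref{eq:H1} gives absolute continuity of $\partial_s^{l}k(\cdot,t)$ with a.e.\ derivative $g(\cdot,t)$, the integral remainder plus Cauchy--Schwarz yields $\norm{G-G_n}{HS}\le C n^{-(l+1)}\|g\|_{L^2}$ at rank $n(l+1)$, and the reindexing gives $o(i^{-l-3/2})$.

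Two points should be tightened. First, the Taylor coefficients $\partial_s^{j}k(a_i,\cdot)$ are not obviously in $L^2(0,1)$ in $t$ (note $V$ is only assumed to lie in $L^1(0,1)$, and the lower-order traces $\partial_s^{j}k(0,\cdot)$ are not controlled by the hypotheses at all), so it is not immediate that $G_n$ is a bounded operator of the stated rank; you should argue that $k_n=k-R$ lies in $L^2((0,1)\times(0,1))$ because both $k$ and the remainder $R$ do, and then recover square-integrability of each coefficient from the equivalence of norms on the $(l+1)$-dimensional space of polynomials of degree $\le l$ on each subinterval $I_i$. Second, the sentence ``the smooth part admits a strictly faster rate'' hides a step: for the piece of $k$ generated by the smooth component $g_1$ you need one more degree of Taylor expansion (or a modulus-of-continuity bound on $g_1$ in the remainder), which raises the rank from $n(l+1)$ to $O(n)$ with a different constant; since the reindexing only uses $r=O(n)$, this is harmless, but it should be said. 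With these two repairs the argument is complete and, to my eye, a legitimate elementary substitute for the citation of \cite{Chang52}.
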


We emphasize that Lemma~\ref{lem:Allen} provides us with upper rate bounds,
corresponding to a minimum speed of the decay to zero of the singular
values. If, in particular, the kernel is infinitely smooth on the whole unit square,
then the decay rate of the associated singular values is faster than
$\mathcal{O}(i^{-\eta})$ for arbitrarily large~$\eta>0$. Consequently
an exponential-type decay of the singular values can take place.
Lower bounds cannot be expected in general, as shows
  the simple rank-one example~$k(s,t) = (s-1/2)_{+}\times
  (t-1/2)_{+}\; (0\leq s,t\leq1)$, which exhibits low smoothness, but the
sequence of singular values with $\sigma_{1}=1$ and $\sigma_i=0\;(i=2,3,...)$ decays at any rate.
However, non-smoothness aspects like non-differentiability, non-Lipschitz and occurring poles
in the kernel give limitations for the decay rate of the singular values.
So we are not aware of examples of exponentially ill-posed
linear problems with kernel~$k$ that does not belong to~$C^\infty([0,1]\times [0,1])$.

Below, we shall determine the kernels $k$ and $\tilde k$ of the self-adjoint
companions~$A^{\ast}A$ and~$\widetilde A^{\ast}\widetilde A$
of the compositions~$A:= B^{(H)}\circ J\colon L^{2}(0,1)\to \ell^{2}$
(with the Hausdorff moment operator),
and~$\widetilde A:= B^{(M)} \circ J\colon L^{2}(0,1) \to L^{2}(0,1)$ (with
a multiplication operator), respectively.

For the first composition we have the following proposition, the proof
of which is given in the appendix.

\begin{proposition} \label{pro:kernel}
The kernel $k$ of the Fredholm integral operator $A^*A$ mapping in~$L^2(0,1)$ 
with $A=B^{(H)}\circ J$  attains the form
\begin{equation} \label{eq:kernel}
k(s,t)= \sum \limits_{j=1}^\infty \frac{(1-s^j)(1-t^j)}{j^2} \qquad (0 \le s,t \le 1).
\end{equation}
\end{proposition}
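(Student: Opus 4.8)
The plan is to avoid manipulating the three factors of $A^{\ast}A = J^{\ast}(B^{(H)})^{\ast}B^{(H)}J$ separately, and instead to compute the action of the composite operator $A = B^{(H)}\circ J$ on a single function in one stroke. Writing out
\[
[Ax]_j = [B^{(H)}(Jx)]_j = \int_0^1 t^{j-1}\lr{\int_0^t x(r)\,dr}\,dt
\]
and interchanging the order of integration by Fubini's theorem (legitimate since $x\in L^2(0,1)\subset L^1(0,1)$ and the integrand is bounded by $\abs{x(r)}$ after integrating in $t$), the inner integral collapses to $\int_r^1 t^{j-1}\,dt = (1-r^j)/j$. This produces the clean identity
\[
[Ax]_j = \scalar{x}{\phi_j}_{L^2(0,1)}, \qquad \phi_j(r):= \frac{1-r^j}{j}\quad (j=1,2,\dots),
\]
so that $A$ is the analysis operator $x\mapsto (\scalar{x}{\phi_j})_{j}$ associated with the system $\{\phi_j\}$, while $A^{\ast}$ is the synthesis operator $(c_j)\mapsto \sum_j c_j\phi_j$.

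With this representation in hand I would identify the kernel through the associated bilinear form rather than by a direct operator computation. For arbitrary $x,y\in L^2(0,1)$ one has $\scalar{A^{\ast}Ax}{y} = \scalar{Ax}{Ay}_{\ell^2} = \sum_{j=1}^\infty \scalar{x}{\phi_j}\scalar{y}{\phi_j}$, and interchanging the summation with the two integrations gives
\[
\scalar{A^{\ast}Ax}{y} = \int_0^1\!\!\int_0^1 \lr{\sum_{j=1}^\infty \phi_j(s)\phi_j(t)}\, x(t)\, y(s)\,dt\,ds .
\]
Since $x,y$ are arbitrary, the kernel of $A^{\ast}A$ is read off as $k(s,t) = \sum_{j=1}^\infty \phi_j(s)\phi_j(t) = \sum_{j=1}^\infty (1-s^j)(1-t^j)/j^2$, which is exactly~(\ref{eq:kernel}).

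The only point requiring care — and the main, though mild, obstacle — is justifying the interchange of the infinite sum with the integrals. This is controlled by the uniform bound $\abs{\phi_j(r)} \le 1/j$ on $[0,1]$: the partial sums $\sum_{j=1}^N \phi_j(s)\phi_j(t)$ are dominated termwise by $\sum_j 1/j^2$ and hence converge uniformly on $[0,1]^2$ to a continuous limit $k$, which lets me pass to the limit under the integral sign. On the sequence side the partial sums $\sum_{j=1}^N \scalar{x}{\phi_j}\scalar{y}{\phi_j}$ converge to $\scalar{Ax}{Ay}_{\ell^2}$ by definition of the $\ell^2$ inner product. The same bound yields $\sum_j \norm{\phi_j}{L^2(0,1)}^2 \le \sum_j 1/j^2 < \infty$, which simultaneously re-confirms that $A$ is Hilbert--Schmidt and that $k$ is a genuine symmetric kernel in $L^2((0,1)\times(0,1))$, consistent with the discussion preceding the proposition.
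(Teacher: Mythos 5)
Your proof is correct, and it reaches the kernel by a somewhat different route than the paper. The shared core is the identity $[B^{(H)}Jx]_j=\frac{1}{j}\int_0^1(1-t^j)x(t)\,dt$, which you obtain by Fubini and the paper obtains by integration by parts --- an inessential difference. Where you genuinely diverge is in the second half: the paper writes out the adjoints $(B^{(H)})^{\ast}$ and $J^{\ast}$ explicitly (citing the known structure of $(B^{(H)})^{\ast}$) and applies them in sequence to reach $[A^{\ast}Ax](s)$, whereas you never touch the adjoints and instead identify the kernel from the quadratic form $\scalar{A^{\ast}Ax}{y}=\scalar{Ax}{Ay}_{\ell^2}=\sum_j\scalar{x}{\phi_j}\scalar{y}{\phi_j}$ with $\phi_j(r)=(1-r^j)/j$, so that $k(s,t)=\sum_j\phi_j(s)\phi_j(t)$ is read off directly. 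Your version is more self-contained (no external reference for the adjoint of the moment operator is needed) and makes the positive-semidefinite rank-one structure of the kernel transparent, which is also why the Hilbert--Schmidt property falls out for free from $\sum_j\|\phi_j\|_{L^2}^2<\infty$; the paper's version has the advantage of producing the intermediate operator $(B^{(H)})^{\ast}B^{(H)}J$ explicitly, which is of independent interest there. Your justification of the interchange of sum and integral via the uniform bound $|\phi_j|\le 1/j$ and the Weierstrass/dominated-convergence argument is sound, and the step from the bilinear form to the kernel is legitimate since the kernel of an integral operator on $L^2(0,1)$ is determined by its bilinear form.
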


The  second composition~$\widetilde A:= B^{(M)}\circ J$ with multiplier function $m(t)=t^\theta$ for $\theta>0$ constitutes a linear Volterra
integral operator. However, it can be rewritten as a linear Fredholm integral operator
\begin{equation}\label{eq:tildeA}
[\widetilde A x](s)= \int \limits _0^1 \kappa(s,t) x(t) dt,\quad \mbox{with}  \quad \kappa(s,t)=\left\{\begin{array}{cl}
s^\theta & \; (0\leq t\leq s\leq 1)\\
0 & \; (0\leq s < t\leq 1)
\end{array}
\right..
\end{equation}
and we refer to~\cite{Freitag05,HW05} for further
investigations. Taking into account that $\kappa(t,s)$ with switched
variables is the kernel of the adjoint integral operator $\widetilde
A^*$, we have that the kernel $k$ of the operator $\widetilde A^*
\widetilde A$ mapping in $L^2(0,1)$ is given as
$$
\tilde k(s,t)=\int_0^1 \kappa(\tau,s)\kappa(\tau,t) d\tau.
$$
This yields the following proposition for the second composition case.
\begin{proposition} \label{pro:multip}
The kernel $\tilde k$ of the Fredholm integral operator $\widetilde A^* \widetilde A$ mapping in $L^2(0,1)$  with $\widetilde A$ from \eqref{eq:tildeA} attains the form
\begin{equation}\label{eq:kerneltilde}
\tilde k(s,t)=\int \limits _{\max(s,t)}^1 \tau^{2\theta}d \tau =1-\frac{\max(s,t)^{2 \theta+1}}{2\theta+1} \quad (0 \le s,t \le 1).
\end{equation}
\end{proposition}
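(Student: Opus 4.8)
The plan is to evaluate directly the formula for the kernel of the composition~$\widetilde A^{\ast}\widetilde A$ recorded in the text immediately above the statement, namely
$$
\tilde k(s,t) = \int_0^1 \kappa(\tau,s)\,\kappa(\tau,t)\,d\tau,
$$
which expresses that $\widetilde A^{\ast}\widetilde A$ is the Fredholm operator obtained by composing the operator with kernel $\kappa$ and its adjoint, whose kernel is $\kappa(t,s)$ with the variables interchanged. Since the factor $J$ is Hilbert--Schmidt and $m(t)=t^{\theta}$ is bounded on $(0,1)$, the operator $\widetilde A^{\ast}\widetilde A$ is genuinely a Fredholm integral operator with an $L^{2}((0,1)\times(0,1))$ kernel, so this representation is legitimate and it suffices to carry out the integration.

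The computation rests entirely on tracking the support of the integrand. From the piecewise definition in~\eqref{eq:tildeA} one reads off that, for fixed $s$, the factor $\kappa(\tau,s)$ equals $\tau^{\theta}$ when $s\leq\tau\leq 1$ and vanishes when $0\leq\tau<s$; likewise $\kappa(\tau,t)=\tau^{\theta}$ precisely when $t\leq\tau$. Hence the product $\kappa(\tau,s)\,\kappa(\tau,t)$ equals $\tau^{2\theta}$ exactly on the set where both conditions hold, that is where $\tau\geq\max(s,t)$, and it vanishes otherwise. This collapses the lower limit of integration to $\max(s,t)$ and yields the first displayed equality $\tilde k(s,t)=\int_{\max(s,t)}^{1}\tau^{2\theta}\,d\tau$. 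Evaluating this elementary integral by means of the antiderivative $\tau^{2\theta+1}/(2\theta+1)$, which is licit because $\theta>0$ and hence $2\theta+1>0$, produces the closed form displayed on the right-hand side of~\eqref{eq:kerneltilde}.

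There is no genuine obstacle in this argument; the only point that demands care is the correct identification of the two indicator supports and the observation that their intersection is governed by the \emph{larger} of $s$ and $t$, so that it is $\max(s,t)$ rather than $\min(s,t)$ that appears as the lower integration limit. One should also note, as a consistency check, that the resulting $\tilde k$ is symmetric in $s$ and $t$, as it must be since $\widetilde A^{\ast}\widetilde A$ is self-adjoint, and this symmetry is manifest from the dependence on $\max(s,t)$.
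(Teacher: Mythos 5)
Your argument is correct and is essentially the paper's own derivation: the text immediately preceding the proposition records $\tilde k(s,t)=\int_0^1 \kappa(\tau,s)\kappa(\tau,t)\,d\tau$, and the proposition follows exactly by the support observation you make, namely that the product of the two kernel factors equals $\tau^{2\theta}$ precisely for $\tau\geq\max(s,t)$ and vanishes otherwise. One caveat: carrying out the final evaluation with the antiderivative $\tau^{2\theta+1}/(2\theta+1)$ gives $\tilde k(s,t)=\bigl(1-\max(s,t)^{2\theta+1}\bigr)/(2\theta+1)$, whereas the closed form printed in \eqref{eq:kerneltilde} reads $1-\max(s,t)^{2\theta+1}/(2\theta+1)$; these agree only for $\theta=0$, so the displayed right-hand side contains a misprint (a missing factor $1/(2\theta+1)$ on the leading term), and your assertion that the integral ``produces the closed form displayed'' should be read modulo that typo. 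The first equality in \eqref{eq:kerneltilde}, which is what the subsequent smoothness discussion actually uses, is exactly what your proof establishes.
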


We are going to discuss the implications of Lemma~\ref{lem:Allen} on
the decay rates of the singular values of both~$A^{\ast}A$
and~$\widetilde A^{\ast}\widetilde A$. We start with the latter.

The kernel~$\tilde k$ from \eqref{eq:kerneltilde} is continuous and satisfies for all $\theta>0$ the Lipschitz condition $\tilde k \in Lip_1([0,1] \times [0,1])$, which means that there is a constant $L>0$ such that for all $s,\hat s,t,\hat t \in [0,1]$
$$|\tilde k(s,t)-\tilde k(\hat s,\hat t)| \le L\,(|s-\hat s|+|t-\hat t|). $$
The author in~\cite{Reade83} proves that in this case we can guarantee
the decay rate
$$
\sigma_i(\widetilde A^* \widetilde A)= \mathcal{O}\left(i^{-2}\right)
\quad \mbox{as} \quad i \to \infty.
$$
The kernel $\tilde k$ from \eqref{eq:kerneltilde}, containing a maximum term,
is not differentiable at the diagonal of the unit square. If it were
continuously differentiable on  $[0,1] \times [0,1]$ then the decay
rate would even be improved to $\sigma_i(\widetilde A^* \widetilde
A)=o\left(i^{-2}\right)$, and we refer to~\cite{Read83}.
  Indeed, the exact asymptotics~$\sigma_i(\widetilde A^*
  \widetilde A) \asymp i^{-2}$ for all $\theta>0$ was shown in~\cite{HW05} .

  We turn to discussing the singular values of the
  operator~$A^{\ast}A$ with kernel $k$ from~\eqref{eq:kernel}.
  Since  the series~$\sum _{j=1}^\infty \frac{(1-s^j)(1-t^j)}{j^2}$ of continuous functions
is uniformly absolutely convergent the kernel $k$ actually belongs to the
space~$C([0,1]\times [0,1])$ of continuous functions. This allows for
partial differentiation with respect to $s$ as
\begin{equation} \label{eq:kernel1}
k_s(s,t)= \sum \limits_{j=1}^\infty \frac{-s^{j-1}(1-t^j)}{j}.
\end{equation}
Figure \ref{fig:kernels} presents a plot of the kernel $k$ and its first partial derivative $k_s$.
\begin{figure}[H]
\begin{center}
  \includegraphics[width=0.49\linewidth]{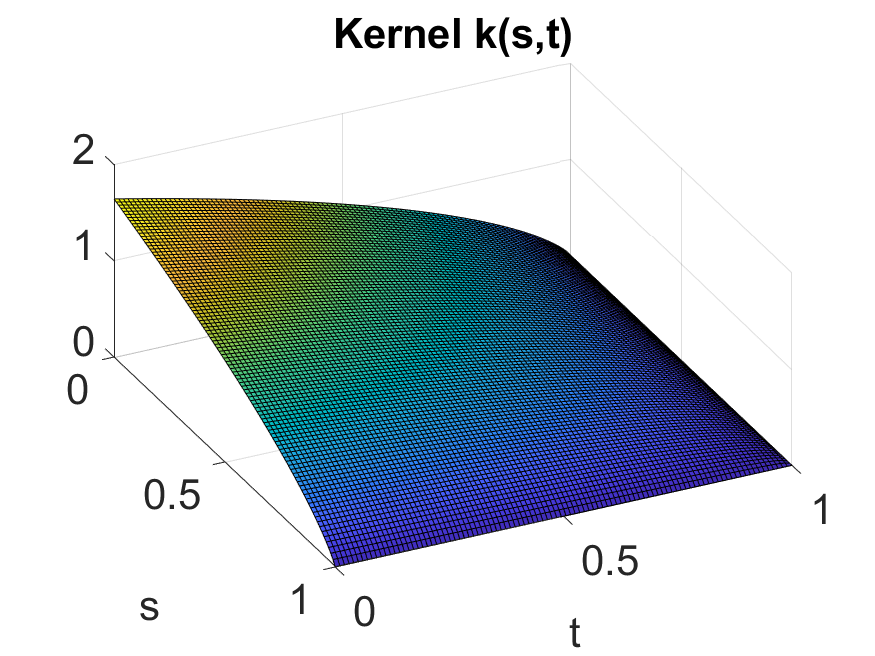}
  \includegraphics[width=0.49\linewidth]{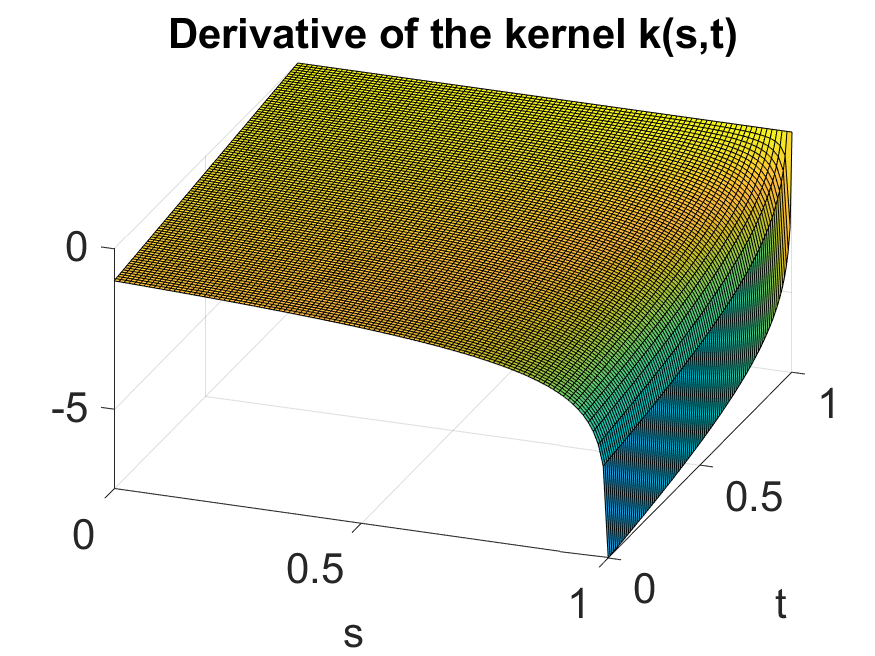}
\caption{Plot of the kernel $k$ and its derivative $k_s$} \label{fig:kernels}
\end{center}
\end{figure}
{\parindent0em The} right picture shows that the partial derivative has a pole at the right boundary with $s=1$  of the unit square. This pole implies that $k \notin Lip_1([0,1]
\times [0,1])$. On the other hand, $k_s$ is smooth elsewhere
and allows for further partial differentiation with a second partial derivative
\begin{equation} \label{eq:kernel2}
k_{ss}(s,t)= \sum \limits_{j=2}^\infty \frac{-(j-1)s^{j-2}(1-t^j)}{j}\,,
\end{equation}
which has also a pole at $s=1$.
We note that the order of the pole there is growing by one for every higher partial differentiation step
with respect to $s$.

Based on  \eqref{eq:kernel2} one can derive, in light of formula~\eqref{eq:H1} from Lemma~\ref{lem:Allen}, that
$$\frac{\partial k(s,t)}{\partial s}= \int \limits_0^s g(\tau,t)\, d\tau + V(t)$$
with $g(\tau,t)=\sum _{j=2}^\infty \frac{-(j-1)s^{j-2}(1-t^j)}{j}   $
and $V(t)=t-1$. Notice, that~$g \notin L^2((0,1) \times (0,1))$, which
prevents the application of Lemma~\ref{lem:Allen}, even  in the
case~$l=1$. Thus, Lemma~\ref{lem:Allen} is not applicable, and
  we may not make inference on the decay rates of the singular values
  by means of considering kernel smoothness.
\begin{remark}
We have not found assertions in the literature, which handle the situation
of such poles in light of decay rates of singular values.
\end{remark}

In summary, the smoothness of the kernel~$k$ from \eqref{eq:kernel} is
strongly limited. In particular we have $k \notin C^\infty([0,1]\times [0,1])$.
This makes an exponential decay rate of the singular
values~$\sigma_i(A)$ appear rather unlikely. However, at present we
have no analytical approach to check this in more detail.

\section{Bounding the singular values of the composite operator~$B^{(H)}
  \circ J$}
\label{sec:bounding-sing-numbers}

Our aim of this section is to improve the upper bound
in~\eqref{eq:rough1} for the singular values $\sigma_i(A)=\sigma_i\lr{B^{(H)}
  \circ J}$ of the composite operator
\begin{equation} \label{eq:BH-J-graph}
 \begin{CD}
 A:\;
@.  L^2(0,1) @> J >> L^2(0,1)  @> B^{(H)} >> \ell^2
  \end{CD}
\end{equation}
We emphasize that this composition constitutes a
Hilbert-Schmidt operator, since its component~$J$ is Hilbert-Schmidt, and our argument
will be based on bounding the Hilbert-Schmidt norm
$$\norm{A}{HS}= \left(\sum \limits_{i=1}^{\infty} \sigma^{2}_{i}(A) \right)^{1/2}.$$
The main result will be the following.
\begin{theorem} \label{thm:improvedrate}
For the composite Hausdorff moment problem~$B^{(H)} \circ
J$ with operators~$J$ from
\eqref{eq:J} and~$B^{(H)}$ from \eqref{eq:Haus},  there exists a positive constant $C$ such that
\begin{equation} \label{eq:rough2}
\sigma_i\lr{B^{(H)}\circ J}\le \frac{C}{i^{3/2}}\quad (i \in \N),
\end{equation}
Consequently, there is some constant~$K>0$ such that
that
$$
\sigma_{i}(B^{(H)} \circ J)/\sigma_{i}(J) \leq
\frac{K}{i^{1/2}}\quad (i \in \N).
$$
\end{theorem}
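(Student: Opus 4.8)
The plan is to bound the singular values by controlling the Hilbert-Schmidt norm of a suitable tail of the operator $A = B^{(H)}\circ J$. Concretely, I would use the projections $Q_n$ onto the span of the first $n$ shifted Legendre polynomials and estimate $\norm{A(I-Q_n)}{HS}$. The guiding inequality is the standard one relating the Hilbert-Schmidt norm of a tail to individual singular values: since $A(I-Q_n)$ has its range spread over singular indices $\geq n+1$ (because $Q_n$ removes an $n$-dimensional subspace), one gets $\sum_{i> n}\sigma_i^2(A)\leq \norm{A(I-Q_n)}{HS}^2$, while on the other hand monotonicity of the $\sigma_i$ gives $n\,\sigma_{2n}^2(A) \leq \sum_{i>n}\sigma_i^2(A)$ (or a similar counting argument bounding $\sigma_i(A)$ in terms of the tail sum). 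Thus if I can show $\norm{A(I-Q_n)}{HS}^2 \leq C/n^2$, I obtain $\sigma_i^2(A) \lesssim i^{-3}$, i.e.\ $\sigma_i(A)\leq C\,i^{-3/2}$, which is exactly \eqref{eq:rough2}.

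The core of the argument is therefore the estimate $\norm{A(I-Q_n)}{HS}^2 \leq C/n^2$. I would expand this Hilbert-Schmidt norm over the Legendre basis: writing $A(I-Q_n) = B^{(H)} J (I-Q_n)$, one has
\begin{equation}
  \norm{A(I-Q_n)}{HS}^2 = \sum_{\ell > n} \norm{B^{(H)} J L_\ell}{\ell^2}^2,
\end{equation}
so the task reduces to estimating $\norm{B^{(H)} J L_\ell}{\ell^2}^2$ for each $\ell > n$ and summing. Here $J L_\ell$ is the antiderivative of a shifted Legendre polynomial, and $B^{(H)}(J L_\ell)$ is its Hausdorff moment sequence. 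The expected behaviour is that these quantities decay like $\ell^{-3}$ (consistent with the target $\sum_{\ell>n}\ell^{-3}\asymp n^{-2}$), and the smoothing effect of $J$ together with the moment structure of $B^{(H)}$ must produce exactly this decay.

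The main obstacle is obtaining the per-term bound $\norm{B^{(H)} J L_\ell}{\ell^2}^2 \lesssim \ell^{-3}$ with explicit control. The difficulty is that $B^{(H)}$ is non-compact, so one cannot simply use $\norm{B^{(H)}}{}\norm{J L_\ell}{}$ — that only gives the rate of $J$ itself and loses the extra factor. Instead I would exploit the precise action of the moment operator on the integrated Legendre polynomials, presumably using integration by parts (transferring $J$ onto the monomials $t^{j-1}$ inside the moment integral) and the orthogonality of the $L_\ell$ against low-degree polynomials to produce strong cancellation for small moment indices $j$. A careful bookkeeping of the resulting $j$- and $\ell$-dependent coefficients, summed over $j$, should yield the $\ell^{-3}$ decay. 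This combinatorial/analytic estimate — getting the \emph{sharp} power rather than a cruder bound — is where the real work lies; the conversion from the Hilbert-Schmidt tail bound to the stated singular-value rate, and the final division by $\sigma_i(J)\asymp i^{-1}$ to obtain the factor $i^{-1/2}$, are then routine.
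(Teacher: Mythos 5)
Your overall architecture is exactly the paper's: expand $\norm{A(I-Q_n)}{HS}^{2}=\sum_{\ell>n}\norm{AL_{\ell}}{\ell^{2}}^{2}$ over the shifted Legendre basis, combine $\sum_{i>n}\sigma_i^{2}(A)\le\norm{A(I-Q_n)}{HS}^{2}$ (the paper's Proposition~\ref{pro:Baumeister}) with the counting argument $n\,\sigma_{2n}^{2}\le\sum_{i>n}\sigma_i^{2}$ (Lemma~\ref{lem:sj-kappa}), and divide by $\sigma_i(J)\asymp i^{-1}$ at the end. Those reductions are all correct. But the decisive quantitative step --- $\norm{A(I-Q_n)}{HS}^{2}\lesssim n^{-2}$ --- is not proved in your proposal. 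You reduce it to the per-term claim $\norm{B^{(H)}JL_{\ell}}{\ell^{2}}^{2}\lesssim \ell^{-3}$ and explicitly defer "the real work" of establishing it. That claim is genuinely delicate: integration by parts gives $[B^{(H)}JL_j]_i=-\tfrac1i\int_0^1 s^i L_j(s)\,ds$ for $j\ge 2$, whence $\norm{AL_j}{\ell^2}^{2}=\sum_{i\ge j-1}\tfrac{1}{i^{2}(2i+1)}\abs{\scalar{h_i}{L_j}}^{2}$ with $h_i=\sqrt{2i+1}\,s^{i}$; the crude bound $\abs{\scalar{h_i}{L_j}}\le 1$ then yields only $\norm{AL_j}{\ell^2}^{2}\lesssim j^{-2}$, which after summation over $j>n$ gives $n^{-1}$ and hence no improvement over $\sigma_i(A)\lesssim i^{-1}$. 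To extract $\ell^{-3}$ per term you would need genuine decay of $\abs{\scalar{h_i}{L_j}}^{2}$ in $i$, i.e.\ the explicit moment formula for Legendre polynomials and its asymptotics --- none of which appears in your sketch. The appeal to "orthogonality against low-degree polynomials producing cancellation" only locates where the sum over $i$ starts; it does not produce the extra power.

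The paper closes this gap without any per-term estimate, by interchanging the order of summation: for fixed moment index $i$,
\begin{equation*}
\sum_{j>n}\abs{\scalar{h_i}{L_j}}^{2}=\norm{(I-Q_n)h_i}{L^{2}(0,1)}^{2}\le\norm{h_i}{L^{2}(0,1)}^{2}=1
\end{equation*}
by Bessel's identity, and the terms with $i<n$ vanish because $h_i$ is then a polynomial of degree at most $n-1$, hence in the range of $Q_n$. This leaves $\norm{A(I-Q_n)}{HS}^{2}\le\sum_{i\ge n}\tfrac{1}{i^{2}(2i+1)}=\bigo(n^{-2})$, which is exactly what is needed. So: your skeleton is right, but the one estimate that carries the whole theorem is missing, and the route you propose for it (sharp per-$\ell$ decay) is substantially harder than the Fubini--Parseval argument that actually does the job. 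You should either carry out the asymptotics of $\scalar{h_i}{L_j}$ in full, or switch to summing over $i$ first.
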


For its proof we make the following preliminary considerations.
We recall the definition of the shifted Legendre
polynomials~$L_{j}\;(j=1,2,\dots)$ from~(\ref{eq:legendre}), as well
as~$Q_{n}\colon L^{2}(0,1) \to L^{2}(0,1)$, being  the orthogonal
projection onto the $n$-dimensional subspace of the polynomials up to
degree $n-1$.

For the further estimates the next result is important. Here, we denote by~$\{\sigma_{i}(A), u_{i},v_{i})\}_{i=1}^\infty$ the singular system
of the compact operator~$A=B^{(H)} \circ J$.
\begin{proposition} \label{pro:Baumeister}
  Let~$Q_{n}$ denote the projections onto $\operatorname{span}\set{L_{1},\dots,L_{n}}$ of the Legendre polynomials up to degree $n-1$, and
  let~$P_{n}$ be the singular projection
  onto~$\operatorname{span}\set{u_{1},\dots,u_{n}}$ of the first $n$ eigenelements of $A$. Then we have
  for~$A=B^{(H)} \circ J$ that

  $$
  \sum_{i=n+1}^{\infty} \sigma^{2}_{i}(A) = \norm{A(I - P_{n})}{HS}^{2} \leq \norm{A(I - Q_{n})}{HS}^{2}.
$$
\end{proposition}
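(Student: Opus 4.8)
The plan is to establish the two assertions separately. The equality
$\sum_{i=n+1}^{\infty}\sigma_i^2(A)=\norm{A(I-P_n)}{HS}^2$ is the
routine spectral part. Since~$P_n$ is the singular projection onto
$\operatorname{span}\set{u_1,\dots,u_n}$, the operator~$I-P_n$ is the
orthogonal projection onto the closed span of the remaining singular
functions~$\set{u_i}_{i>n}$. Using the singular system
$A u_i=\sigma_i(A)v_i$ and orthonormality of the~$v_i$, I would compute
the Hilbert-Schmidt norm by summing~$\norm{A(I-P_n)u_i}{\ell^2}^2$ over
any orthonormal basis; only the indices~$i>n$ contribute, each
yielding~$\sigma_i^2(A)$. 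This gives the displayed equality directly
from the definition of~$\norm{\cdot}{HS}$.

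The inequality~$\norm{A(I-P_n)}{HS}\le\norm{A(I-Q_n)}{HS}$ is the
substantive point, and the key observation is a dimension and optimality
argument. By~(\ref{eq:s-nums}) the singular (approximation) numbers are
optimal: among all projections~$R$ onto an $n$-dimensional subspace,
$P_n$ minimizes~$\norm{A(I-R)}{X\to Y}$ in operator norm. The analogous
statement for the Hilbert-Schmidt norm is what is needed here, namely
that the singular projection~$P_n$ also minimizes the Hilbert-Schmidt
tail~$\norm{A(I-R)}{HS}$ over all rank-$n$ orthogonal projections~$R$.
This is a standard extremal property of the singular value
decomposition (the Schmidt--Eckart--Young--Mirsky theorem for the
Hilbert-Schmidt norm): the best rank-$n$ approximation of~$A$ in
Hilbert-Schmidt norm is~$\sum_{i=1}^n\sigma_i(A)\scalar{\cdot}{u_i}v_i
=AP_n$, and hence~$\norm{A-AP_n}{HS}=\norm{A(I-P_n)}{HS}$ is minimal.
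Since~$Q_n$ is itself an orthogonal projection onto an
$n$-dimensional subspace (the span of~$L_1,\dots,L_n$),
applying this minimality with~$R:=Q_n$ yields
$\norm{A(I-P_n)}{HS}\le\norm{A(I-Q_n)}{HS}$.

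The main obstacle is to invoke the Hilbert-Schmidt optimality cleanly:
one must ensure that the extremal property applies to projections~$R$
onto arbitrary $n$-dimensional subspaces, not merely to the singular
subspaces, and that the Hilbert-Schmidt norms involved are finite.
Finiteness is immediate here because~$A=B^{(H)}\circ J$ is
Hilbert-Schmidt (its factor~$J$ is), so every tail
$\norm{A(I-R)}{HS}^2\le\norm{A}{HS}^2<\infty$ is controlled. For the
extremal property itself, I would either cite the
Schmidt--Eckart--Young--Mirsky theorem directly, or give a short
self-contained argument: writing~$\norm{A(I-R)}{HS}^2=\norm{A}{HS}^2
-\norm{AR}{HS}^2$ reduces the claim to showing that~$\norm{AR}{HS}^2$
is maximized over rank-$n$ projections by~$R=P_n$, and expanding
$\norm{AR}{HS}^2=\operatorname{trace}(R A^{\ast}A R)$ shows this equals
the sum of the~$n$ largest eigenvalues of~$A^{\ast}A$ precisely
when~$R$ projects onto the top eigenspace, by the Ky Fan trace
inequality. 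Either route closes the inequality and completes the proof.
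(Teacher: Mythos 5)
Your argument is correct, but it reaches the inequality by a different route than the paper. You invoke the extremal characterization of the singular projection: via the Schmidt--Eckart--Young--Mirsky theorem (or, equivalently, the Pythagorean splitting $\norm{A(I-R)}{HS}^{2}=\norm{A}{HS}^{2}-\norm{AR}{HS}^{2}$ for orthogonal projections $R$ together with the Ky Fan trace inequality), $AP_{n}$ is the best rank-$n$ approximation of $A$ in the Hilbert--Schmidt norm, and taking the competitor $AQ_{n}$ gives the claim. The paper instead uses the \emph{additivity} of singular values, $\sigma_{n+i+1}(K+L)\le\sigma_{n+1}(K)+\sigma_{i+1}(L)$, applied to the splitting $A=AQ_{n}+A(I-Q_{n})$; since $AQ_{n}$ has rank at most $n$, the term $\sigma_{n+1}(AQ_{n})$ vanishes, yielding the term-by-term bound $\sigma_{n+i+1}(A)\le\sigma_{i+1}(A(I-Q_{n}))$, which is then squared and summed. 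The paper's route is slightly stronger in that it delivers a comparison of individual singular values (hence the same conclusion in any unitarily invariant norm), whereas your Pythagorean variant is tied to the Hilbert--Schmidt norm; on the other hand, your argument is arguably cleaner for the purpose at hand and makes the role of $P_{n}$ as the optimizer explicit. Your handling of the equality part (expanding the HS norm over the singular basis, with finiteness guaranteed because $J$ is Hilbert--Schmidt) matches what the paper asserts implicitly. Both proofs are complete and correct.
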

\begin{proof}
  We shall use the \emph{additivity} of the singular values,
  i.e.,\ it holds true that
  $$
  \sigma_{n+i+1}(K+L) \leq \sigma_{n+1}(K) + \sigma_{i+1}(L),\quad
  \text{for all} \ n \in \N, \ i\geq 1.
  $$
  In particular we see that
  $$
 \sigma_{n+i+1}(A) \le \sigma_{n+1}(AQ_{n}) + \sigma_{i+1}(A(I - Q_{n})) = \sigma_{i+1}(A(I - Q_{n})),
  $$
  because $\sigma_{n+1}(AQ_{n})$ vanishes by definition of $Q_n$.
  Consequently we can bound
  \begin{align*}
    \sum_{i=n+1}^{\infty} \sigma^{2}_{i}(A) &= \sum_{i=0}^{\infty}
                                      \sigma^{2}_{n+i+1}(A) \leq
                                      \sum_{i=1}^{\infty}\sigma^{2}_{i}(A(I
                                      - Q_{n}))\\
    &= \norm{A(I - Q_{n})}{HS}^{2},
  \end{align*}
  with equality for $Q_n$ being the singular projections~$P_{n}$.
\end{proof}

Finally we mention the following technical result, which is
well-known. For the sake of completeness we add a brief proof.
\begin{lemma}\label{lem:sj-kappa}
  Let~$s_{i} \;(i \in \N)$ be non-increasing, and let~$\kappa>0$. Suppose that there is a
  constant~$C_{1}<\infty$ such that~$\sum_{i=n+1}^{\infty}
  s_{i}^{2}\leq C n^{-2\kappa}$ for $n=1,2,\dots$. Then there is a
  constant~$C_{2}$ such that~$s_{i}^{2} \leq C_{2} i^{-(1+2\kappa)}$ for $i=1,2,\dots.$
\end{lemma}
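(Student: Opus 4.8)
The plan is to exploit the monotonicity of $(s_i)$ to convert the tail-sum hypothesis into a pointwise bound at doubled indices, and then to extend this to all indices by a second appeal to monotonicity. Throughout I write $C_1$ for the constant in the tail bound $\sum_{i=n+1}^{\infty} s_i^2 \le C_1\, n^{-2\kappa}$.

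The crux is to bound the tail sum from below. Since $(s_i)$ is non-increasing, each of the $n$ terms with indices $n+1,\dots,2n$ satisfies $s_i^2 \ge s_{2n}^2$, so that
\[
n\, s_{2n}^2 \le \sum_{i=n+1}^{2n} s_i^2 \le \sum_{i=n+1}^{\infty} s_i^2 \le C_1\, n^{-2\kappa}.
\]
Dividing by $n$ yields the pointwise bound at even indices,
\[
s_{2n}^2 \le C_1\, n^{-(1+2\kappa)} \qquad (n=1,2,\dots).
\]
This is the only genuine idea in the argument; everything that follows is bookkeeping to pass from the subsequence $(s_{2n})$ to the full sequence.

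Next I would rewrite this in terms of the running index. For an even index $i=2n$, substituting $n=i/2$ gives $s_i^2 \le C_1\, 2^{1+2\kappa}\, i^{-(1+2\kappa)}$. For an odd index $i=2n+1 \ge 3$, monotonicity gives $s_i^2 \le s_{2n}^2 \le C_1\, n^{-(1+2\kappa)}$; since $n=(i-1)/2 \ge i/4$ for $i \ge 2$, one obtains $s_i^2 \le C_1\, 4^{1+2\kappa}\, i^{-(1+2\kappa)}$. Finally the lone index $i=1$ is harmless, as $s_1^2$ is a fixed finite number that can be absorbed into the constant. Setting $C_2 := \max\{\, s_1^2,\; C_1\, 4^{1+2\kappa}\,\}$ therefore delivers $s_i^2 \le C_2\, i^{-(1+2\kappa)}$ for every $i \in \N$.

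I do not anticipate any real obstacle. The single substantive step is the lower estimate $n\, s_{2n}^2 \le \sum_{i>n} s_i^2$ afforded by monotonicity, and the remaining manipulations are elementary comparisons of powers that differ only by fixed multiplicative constants.
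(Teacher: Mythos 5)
Your proof is correct and rests on exactly the same key estimate as the paper's: the monotonicity bound $n\,s_{2n}^{2} \le \sum_{i=n+1}^{2n} s_{i}^{2} \le C_{1}\,n^{-2\kappa}$. The paper stops after deriving $s_{2n}^{2}\le C_{1}\,n^{-(1+2\kappa)}$ and leaves the passage to general indices implicit, whereas you carry out that bookkeeping explicitly; the argument is the same.
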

\begin{proof}
We can estimate as
$$n\, s_{2n}^2 \le \sum_{i=n+1}^{2n}
  s_{i}^{2}  \leq C n^{-2\kappa},  $$
  which gives $s_{2n}^2 \le  C n^{-(1+2\kappa)}$ and proves the lemma.
\end{proof}

Let us introduce the normalized functions
$$
h_{i}(s):= \sqrt{2i+1}s^{i}\,\in L^{2}(0,1) \quad (i=0,1,2,\dots).
$$
 \begin{lemma}\label{lem:ji}
   For each~$i\geq 1,\ j\geq 2$ we have that
   $$
   \scalar{A L_{j}}{e_{i}}_{\ell^2} = -\frac{1}{i\sqrt{2i+1}}\scalar{h_{i}}{L_{j}}.
   $$
 \end{lemma}
 \begin{proof} We have $\scalar{A
     L_{j}}{e_{i}}_{\ell^2}=\scalar{B^{(H)}(J \,L_{j})}{e_{i}}_{\ell^2}
   $.
Using the formula (\ref{eq:BJ}) with~$x:= L_{j} \;\ (j=2,3,\dots)$, and since~$L_{j}
\perp 1$ for~$j\geq 2$, we see that
   $$
   \scalar{A L_{j}}{e_{i}}_{\ell^2}
   =
   -\frac{1}{i}\left[\int_{0}^{1}s^i\, L_{j}(s) \; ds \right] =
   -\frac{1}{i} \scalar{s^{i}}{L_{j}}_{L^2(0,1)} = -\frac{1}{i\sqrt{2i+1}} \scalar{h_{i}}{L_{j}}_{L^2(0,1)}.
   $$
   This completes the proof.
 \end{proof}

\begin{proof}
  [Proof of Theorem~\ref{thm:improvedrate}]
 Since the system $\{L_j\}_{j=1}^\infty$ of shifted Legendre
 polynomials is an orthogonal basis in $L^2(0,1)$, we have by virtue of~\cite[Thm.~15.5.5]{Pie78} that
 \begin{equation}
   \label{eq:HS-expand}
\norm{A (I- Q_{n})}{HS}^{2} = \sum_{j=1}^{\infty} \norm{A (I-
  Q_{n})L_{j}}{\ell^{2}}^{2}
= \sum_{j=n+1}^{\infty} \norm{A L_{j}}{\ell^{2}}^{2},
\end{equation}
and we shall bound by using Lemma~\ref{lem:ji} that
\begin{align} \label{eq:hope}
  \norm{A (I- Q_{n})}{HS}^{2} &=
    \sum_{i=n}^{\infty}\frac{1}{i^{2}(2i+1)}
    \sum_{j=n+1}^{\infty}\abs{\scalar{h_{i}}{L_{j}}_{L^{2}(0,1)}}^{2}\\
    &=  \sum_{i=n}^{\infty}\frac{1}{i^{2}(2i+1)}\norm{(I - Q_{n}) h_{i}}{L^{2}(0,1)}^{2}.\label{it:hope}
\end{align}
The norm square within the above sum is less than or equal to one,
such we arrive at
$$
 \norm{A (I- Q_{n})}{HS}^{2} \leq
 \sum_{i=n}^{\infty}\frac{1}{i^{2}(2i+1)} \leq \frac 1 2 \sum_{i=n}^{\infty}\frac{1}{i^{3}}
 $$
 The sum on the right is known to be minus one half of the second derivative
 $\psi^{(2)}(n)$ of the digamma function, see
 \cite[(6.4.10)]{Handbook64}. Thus we have
\begin{equation} \label{eq:psi1}
 \norm{A (I- Q_{n})}{HS}^{2} \le \frac{-\psi^{(2)}(n)}{4}.
\end{equation}
Moreover, from the series expansion of the digamma function, see~\cite[(6.4.13)]{Handbook64},
we see that~$\lim \limits _{n \to \infty} n^2 \,\psi^{(2)}(n)=-1$,
which implies
\begin{equation} \label{eq:psi2}
 \norm{A (I- Q_{n})}{HS}^{2} \le \frac{1}{3.999\,n^2}
\end{equation}
for sufficiently large $n$.
Finally, applying Proposition~\ref{pro:Baumeister} and
Lemma~\ref{lem:sj-kappa} (with~$\kappa=1$ and
$s_i=\sigma_i(A)\;(i \in \mathbb{N})$) 
we see that~$\sigma_i(A) \le \frac{C}{i^{1.5}}$ for some constant
$C>0$. This completes the proof.
\end{proof}

\section{Discussion}
\label{sec:discussion}

We extend the previous discussions in a few aspects. As it is seen
from Corollary~\ref{cor:HausdorffJ} and Theorem~\ref{thm:improvedrate}
there is a gap for the composition~$B^{(H)}\circ J$ between the
obtained decay rate of the order~$i^{-3/2}$ of the singular values and the
available lower bound of the order~$\exp(-\underline{C}\,i)$ as $i\to\infty$. We
shall dwell on this further, and we highlight the main points that are
responsible for the lower and upper bounds, respectively.

The overall results are entirely based on considering the Legendre
polynomials~$L_j$ as means for approximation.
Clearly, these play a prominent role in our handling of compositions that contain
the operator~$B^{(H)}$.
In particular, the normalized polynomials $L_j$ constitute an orthonormal basis in~$L^{2}(0,1)$,
and the upper bounds from Lemma~\ref{lem:AP-Legendre} show that these are
suited for approximation. However, as a consequence of using the
Legendre polynomials we arrive at the $n$-sections of the Hilbert
matrix~$H_{n}$, see Lemma~\ref{lem:PAQ}. As emphasized in the proof of
Theorem~\ref{thm:Hausdorff}, the condition numbers of the
Hilbert matrix~$H_{n}$ are of the order~$\exp(4n)$, and this in turn
yields the lower bound, after applying
Theorem~\ref{thm:general}. Despite of the fact that this general
result may not be sharp for non-commuting operators in the composition, we may argue that
using $n$-sections~$H_{n}$ is not a good advice for obtaining sharp
lower bounds. So, it may well be that the lower bounds could be
improved by using other orthonormal bases than the Legendre
polynomials.

The obtained upper bound is based on the approximation of~$B^{(H)}\circ
J$ by Legendre polynomials in the Hilbert-Schmidt norm, and we refer to the inequality
\eqref{eq:psi2}. There are indications in our analysis, for example in the context of \eqref{it:hope}, that this
bound cannot be improved, but what when using other bases?

Another aspect may be interesting. While we established an improved
rate for the composition~$B^{(H)}\circ J$, this is not possible for
the composition~$B^{(M)}\circ J$, see the discussion in
Section~\ref{sec:intro}. In the light of the spectral theorem, and we
omit details,  the
operator~$B^{(M)}$ is orthogonally  equivalent to a multiplication
operator $M_{f}$ mapping in $L^2(0,1)$ with a multiplier function~$f$ and possessing zero as
accumulation point, and isometries~$U\colon \ell^{2} \to L^{2}(0,1)$
and~$V\colon L^{2}(0,1) \to L^{2}(0,1)$,
for which we have $B^{(H)} = U^{\ast} M_{f} V$. This implies that
$$
B^{(H)} \circ J =  U^{\ast}  \circ M_{f}\circ  V \circ J.
$$
Clearly we have that~$\sigma_{i}( U^{\ast}  \circ M_{f}\circ  V \circ
J) = \sigma_{i}(M_{f}\circ  V \circ J)$, which looks very similar to
the problem of the composition $B^{(M)}\circ J$, where
$
\sigma_{i}(B^{(M)} \circ V \circ J)\asymp \sigma_{i}(B^{(M)}\circ J),
$
but with the intermediate
isometry~$V$. Therefore, we may search for isometries~$V\colon
L^{2}(0,1) \to L^{2}(0,1)$ such that we arrive at $$
\sigma_{i}(B^{(H)} \circ V \circ J)\asymp \sigma_{i}(B^{(H)}
\circ J).
$$
Clearly, this holds true for the identity, and this does
not hold true for~$V$ from above connected with the Hilbert matrix. Because isometries turn orthonormal
bases onto each other, we are again faced with the problem, which
approximating orthonormal basis is best suited as means of
approximation in the composition~$B^{(H)}\circ J$. Thus, the results
presented here are only a first step for better understanding the
problem of approximating a composition of a compact mapping followed by
a non-compact one.

\section*{Acknowledgment}
The authors express their deep gratitude to Daniel Gerth (TU Chemnitz,
Germany) for fruitful discussions and that he kindly provided Figure~\ref{fig:kernels}.
We also appreciate thanks to  Robert Plato (Univ.~of Siegen, Germany) for his hint on studying the Fredholm integral operator kernel
of $A^*A$ in $L^2$, which gives additional motivation. Bernd Hofmann is supported by the German Science Foundation (DFG) under the grant~HO~1454/13-1 (Project No.~453804957).
\appendix

\section{Proof of Proposition~\ref{pro:kernel}}
To prove the series representation~(\ref{eq:kernel}) for the kernel
$k$ 
of~$A^{\ast}A$, we start with
$$
[B^{(H)}Jx]_j=\left[\int_0^1\left(\int_0^t x(\tau)
    d\tau\right)\,t^{j-1}dt \right]_j \qquad (j=1,2,\dots,\;x \in
L^2(0,1)).
$$
Integration part parts yields
\begin{equation}\label{eq:BJ}
[B^{(H)}Jx]_j=\left[\frac{1}{j}\int_0^1 (1-t^j)\,x(t)dt \right]_j \qquad (j=1,2,\dots).
\end{equation}
Taking into account the well-known structure of the adjoint operator $(B^{(H)})^*$ of~$B^{(H)}$ (see, e.g., \cite[Proposition~3]{GHHK21}), we can further write
$$[(B^{(H)})^*B^{(H)}Jx](\eta)=\sum_{j=1}^\infty \frac{\eta^{j-1}}{j}\,\int_0^1 (1-t^j)\,x(t)dt \qquad (0 \le \eta \le 1).   $$
Applying the operator $J^*$ from the left, for which the analytical structure is also well-known, and we find again using integration by parts the formulas
$$[A^*Ax](s)=[J^*(B^{(H)})^*B^{(H)}Jx](s)=\sum_{j=1}^\infty \int_s^1\frac{\eta^{j-1}}{j}\,\left(\int_0^1 (1-t^j)\,x(t)dt\right) d\eta$$
$$=\sum_{j=1}^\infty \frac{1}{j^2}\,\int_0^1 (1-t^j)\,x(t)dt -  \sum_{j=1}^\infty \frac{s^j}{j^2}\,\int_0^1 (1-t^j)\,x(t)dt\qquad (0 \le s \le 1).$$
We can rewrite this as
$$[A^*Ax](s)=\int_0^1 \left(\sum_{j=1}^\infty
  \frac{(1-s^j)(1-t^j)}{j^2}\right) x(t)dt \qquad (0 \le s \le 1),$$
which shows the representation~(\ref{eq:kernel}),
and this completes the proof.



\begin{thebibliography}{99}

\bibitem{Handbook64} {M.~Abramowitz and I.~A.~Stegun}, {\it Handbook of Mathematical Functions with Formulas, Graphs, and Mathematical Tables}, National Bureau of Standards Applied
Mathematics Series, No.~55 U.S.~Government Printing Office, Washington, D.C., 1964.


\bibitem{Angbook02} {D.~D.~Ang, R.~Gorenflo, V.~K.~Le, and D.~D.~Trong}, {\it Moment Theory and Some Inverse Problems in Potential Theory and Heat Condution}, Springer-Verlag, Berlin-Heidelberg, 2002.

\bibitem{Beckermann00}  {B.~Beckermann}, {\it The condition number of real Vandermonde, Krylov and  positive definite Hankel matrices}, Numer.~Math., 85(4):553--577, 2000.

\bibitem{BHM13} {R.~Bot\c,  B.~Hofmann and  P.~Mathé}, {\it
      Regularizability of ill-posed problems and the modulus of
      continuity},  Z. Anal. Anwend. 32(3):299-312, 2013.

\bibitem{Chang52} {S.-H.~Chang}, {\it  A generalization of a theorem
    of Hille and Tamarkin with applications},  Proc. London Math. Soc. (3)2:22--29, 1952.





\bibitem{Freitag05} {M.~Freitag and B.~Hofmann}, {\it Analytical and numerical studies on the influence of multiplication operators for the ill-posedness of inverse problems}, J.~Inverse Ill-Posed Probl., 13(2):123--148, 2005.

\bibitem{GHHK21} {D.~Gerth, B.~Hofmann, C.~Hofmann and S.~Kindermann}, {\it The Hausdorff moment problem in the light of ill-posedness of type~I}, Eurasian Journal of Mathematical and Computer Applications,
9(2):57--87, 2021.

\bibitem{Hausdorff23} {F.~Hausdorff}, {\it Momentprobleme f\"ur ein endliches Intervall} (German), Math.~Z.,
16(1):220--248, 1923.

\bibitem{HofKin10} {B.~Hofmann and S.~Kindermann}, {\it On the degree of ill-posedness for linear problems with non-compact operators}, Methods Appl.~Anal., 17(4):445--461, 2010.

\bibitem{HMS08}  {B.~Hofmann, P.~Math\'{e} and M.~Schieck},
{\it Modulus of continuity for conditionally stable ill-posed problems in Hilbert space},
J.~Inverse Ill-Posed Probl., 16(6):567--585, 2008.



\bibitem{HW05}  {B.~Hofmann and L.~von Wolfersdorf},  {\it Some results and a conjecture on the degree of ill-posedness for integration operators with weights}, Inverse Problems, 21(2):427--433, 2005.

\bibitem{HW09}  {B.~Hofmann and L.~von Wolfersdorf}, {\it A new result on the singular value asymptotics of integration oprators with weights}, J.~Integral Equations Appl., 21(2):281--295, 2009.

\bibitem{Inglese92} {G.~Inglese}, {\it Recent results in the study of the moment problem}, In: {\it Theory and Practice of Geophysical Data Inversion}, Proc.~of the 8th Int.~Math.~Geophysics Seminar on Model Optimization in Exploration Geophysics 1990, Vieweg, Braunschweig-Wiesbaden, 1992, pp.~73--84.


\bibitem{Koenig86} {H.~K\"{o}nig}, {\it Eigenvalue distribution of compact operators}, Birkh\"{a}user Verlag, Basel, 1986.

\bibitem{Luetal13} {S.~Lu, V.~Naumova and S.~V.~Pereverzev}, {\it Legendre polynomials as a recommended basis for numerical differentiation in the presence of stochastic white noise},
  J.~Inverse Ill-Posed Probl., 21(2):193--216, 2013.
  
\bibitem{Nashed87} {M.~Z.~Nashed}, {\it A new approach to classification and regularization of ill-posed operator equations},
In: Inverse and Ill-posed Problems Sankt Wolfgang, 1986, volume~4 of Notes Rep.~Math.~Sci.~Engrg. (Eds.: H.~W.~Engl and C.~W.~Groetsch),
Academic Press, Boston, 1987, pp.~53--75.

\bibitem{Pie74}{A.~Pietsch}, {\it {$s$}-numbers of operators in {B}anach spaces},
  Studia Math., 51:201--223, 1974.

\bibitem{Pie87} {A.~Pietsch}, {\it Eigenvalues and $s$-numbers},
  Cambridge Studies in Advanced Mathematics, Cambridge University
  Press, Cambridge, 1987.

\bibitem{Pie78} {A.~Pietsch}, {\it Operator Ideals}, Mathematische Monographien [Mathematical Monographs] Vol.~16, Dt.~Verlag der Wissenschaften, Berlin, 1978.

\bibitem{Read83} {J.~B.~Reade}, {\it Eigenvalues of positive definite kernels}, SIAM J.~Math.~Anal., 14(1):152--157, 1983.

\bibitem{Reade83} {J.~B.~Reade}, {\it Eigenvalues of Lipschitz kernels}, Math.~Proc.~Cambridge Philos.~Soc., 93(1):135--140, 1983.



\bibitem{Tal87}{G.~Talenti}, {\it Recovering a function from a finite number of moments}, Inverse Problems, 3(3):501--517, 1987.

\bibitem{Todd54} {J.~Todd}, {\it The condition number of the finite segment of the Hilbert matrix},
Nat.~Bur.~of Standards Appl.~Math.~Series, 39:109--116, 1954.

\bibitem{VuGo94}  {Vu Kim Tuan and R.~Gorenflo}, {\it Asymptotics of singular values of fractional integral operators}, Inverse Problems, 10(4):949--955, 1994.

\bibitem{Wang12} {H.~Wang, S.~Xiang}, {\it On the convergence rates of
    Legendre approximation}, Math. Comput., 81:861–877, 2012.


\bibitem{Wilf70}  {H.~S.~Wilf}, {\it Finite Sections of Some Classical Inequalities}, Springer-Verlag, New York-Berlin, 1970.

\bibitem{Zhao10} {Z.~Zhao}, {\it A truncated Legendre spectral method for solving numerical differentiation}, Int.~J.~Comput.~Math., 87(14):3209-3217, 2010.


\end{thebibliography}
\end{document}